\theoremstyle{plain}
\newtheorem{Thm}{Theorem}[section]
\newtheorem*{Thm*}{Theorem}
\newtheorem{Prop}[Thm]{Proposition}
\newtheorem{Lem}[Thm]{Lemma}
\newtheorem{Conj}[Thm]{Conjecture}
\theoremstyle{definition}
\newtheorem{Def}[Thm]{Definition}
\newtheorem{Ex}[Thm]{Example}
\theoremstyle{remark}
\newtheorem{Rmk}[Thm]{Remark}
\numberwithin{equation}{section}
\newcounter{myenum}
  {\end{list}}
\newcounter{myenumsec}
  {\end{list}}
\newcommand{\C}{\mathbb C}
\newcommand{\Q}{\mathbb{Q}}
\newcommand{\Z}{\mathbb{Z}}
\newcommand{\one}{\mathbf{1}}
\newcommand{\Gm}{\mathbb{G}_m}
\newcommand{\Ga}{\mathbb{G}_a}
\newcommand{\GL}{\operatorname{GL}}
\newcommand{\SL}{\operatorname{SL}}
\newcommand{\PGL}{\operatorname{PGL}}
\newcommand{\Sp}{\operatorname{Sp}}
\newcommand{\OO}{\operatorname{O}}
\newcommand{\SO}{\operatorname{SO}}
\newcommand{\la}{\langle}
\newcommand{\ra}{\rangle}
\newcommand{\Ind}{\operatorname{Ind}}
\newcommand{\Hom}{\operatorname{Hom}}
\newcommand{\Span}{\operatorname{span}}
\newcommand{\Int}{\operatorname{Int}}
\newcommand{\diag}{\operatorname{diag}}
\newcommand{\Irr}{\operatorname{Irr}}
\newcommand{\Lcal}{\mathfrak{L}}
\newcommand{\oalpha}{{\overline{\alpha}}}
\newcommand{\qand}{{\quad\text{and}\quad}}
\newcommand{\st}{\,:\,}
\title[On dual groups of symmetric varieties]{On dual groups of symmetric varieties and\\
  distinguished representations of $p$-adic groups}
\author{Shuichiro Takeda}
\address{Department of Mathematics\\
Graduate School of Science\\
Osaka University\\
Toyonaka, Osaka 560-0043\\
Japan}
\email{takedas@math.sci.osaka-u.ac.jp}
\begin{document}

\maketitle

\begin{abstract}
Let $X=H\backslash G$ be a spherical variety over a $p$-adic field. Assume $G$ is split. Let $\widehat{G}$ be the Langlands dual group of $G$.  There is a complex group $\widehat{G}_X$ whose root datum is the little Weyl group of $X$. It was proposed by Sakellaridis-Venkatesh and fully proven by Knop and Schalke that there is a homomorphism $\widehat{\varphi}_X:\widehat{G}_X\times\SL_2(\C)\to \widehat{G}$. Conjecturally, this detects the $H$-distinguished representations of $G$.

In this strictly utilitarian note, assuming $X$ is a symmetric variety, we give a more conceptual way of constructing the homomorphism $\widehat{\varphi}_X:\widehat{G}_X\times\SL_2(\C)\to \widehat{G}$, and make a few conjectures on how $\widehat{\varphi}_X$ is related to $H$-distinguished representations of $G$ by using various known examples and conjectures, especially in the framework of the theory of Kato-Takano and Lagier on relative cuspidality and relative square integrability. We will also show that the local Langlands parameter of the trivial representation of $G$ factors through $\widehat{\varphi}_X$ for any symmetric variety $X=H\backslash G$.

\end{abstract}

%%%%%%%%%%%%%%%%%%%%%%%%%%%%%%%%%%%%%%%%%%%%%%%%%%%%%%%%%%%%%%%%%%%

\section{Introduction}

%%%%%%%%%%%%%%%%%%%%%%%%%%%%%%%%%%%%%%%%%%%%%%%%%%%%%%%%%%%%%%%%%%%

Let $F$ be a non-archimedean local field of characteristic 0, and $G$ a connected reductive group split over $F$ equipped with an $F$-involution
$\theta$. Let $H$ be the subgroup of $\theta$-fixed points of $G$, so that the quotient $X:=H\backslash G$ is a symmetric variety.

An irreducible admissible representation $(\pi, V)$ of $G$ is said to be $H$-distinguished if there exists a nonzero $H$-invariant linear form
$\lambda: V\rightarrow\C$, so that $\lambda(\pi(h)v)=\lambda(v)$ for all $h\in H$ and $v\in V$, namely $\Hom_H(\pi,\one)\neq 0$. It is then expected that the (conjectural) local Langlands parameter $\varphi_\pi:WD_F\to \widehat{G}$ of $\pi$ should factor through some smaller subgroup of $\widehat{G}$. (Here, as usual, $WD_F$ is the Weil-Deligne group of $F$ and $\widehat{G}$ is the Langlands dual group of $G$.) Indeed, in their monumental work \cite{SV}, Sakellaridis and Venkatesh, motivated by the work of Gaitsgory and Nadler \cite{Gaitsgory-Nadler}, proposed a ``dual group" $\widehat{G}_X$ of $X$ and a (conjectural) homomorphism $\widehat{G}_X\times\SL_2(\C)\to \widehat{G}$ in the broader context of spherical varieties, so that the local Langlands parameter $\varphi_\pi$ should factor through the map $\widehat{G}_X\times\SL_2(\C)\longrightarrow \widehat{G}$. The construction of the map was later completed by Knop and Schalke \cite{Knop_Schalke}. Their approach is of a highly combinatorial nature, relying on the classification of spherical varieties. Though their method offers a precise and systematic framework, it can be technically demanding for those less acquainted with the subject, including the author of the present paper.

In this paper, we construct the map in the case where $X=H\backslash G$ is a symmetric variety by using the involution $\theta$ and the root datum of $G$ as the only input data. To be more specific, the given involution $\theta$ naturally gives rise to an involution on the root datum of $G$ and hence of $\widehat{G}$. This naturally divides the root datum into two root data: the $\theta$-invariant root datum and the $\theta$-split root datum. We denote their corresponding complex groups by $\widehat{G}^+$ and $\widehat{G}^-$, respectively. To be more explicit, for a suitably chosen torus $T$, we define
\[
\widehat{T}^+=\{t\in\widehat{G}\st \theta(t)=t\}^\circ\qand \widehat{T}^-=\{t\in\widehat{T}\st \theta(t)=t^{-1}\}^\circ,
\]
where $\widehat{T}$ is the dual torus of $T$. Then the inclusions
\[
\widehat{T}^+\subseteq\widehat{T}\qand \widehat{T}^-\subseteq \widehat{G}
\]
extend to inclusions
\[
\widehat{G}^+\subseteq \widehat{G}\qand \widehat{G}^-\subseteq \widehat{G}.
\]
We set
\[
\widehat{G}_X:=\widehat{G}^-
\]
and let
\[
\widehat{\varphi}^+:\SL_2(\C)\longrightarrow \widehat{G}^+
\]
be the so-called principal $\SL_2$-homomorphism of $\widehat{G}^+$. We will then show that the two subgroups $\widehat{\varphi}^+(\SL_2(\C))$ and $\widehat{G}^-$ commute with each other, so that we have a homomorphism
\[
\widehat{\varphi}_X:\widehat{G}_X\times\SL_2(\C)\longrightarrow \widehat{G}.
\]

We will also recast the theory of $H$-matrix coefficients developed by Kato and Takano \cite{KT-Cuspidal, KT-Square} and Lagier \cite{Lagier}, which was later augmented by the author \cite{Takeda}.

Let us recall the theory here. Let $(\pi, V)$ be an $H$-distinguished representation with unitary central character, and $\lambda\in \Hom_H(\pi,\one)$ nonzero. For each $v\in V$, define
\[
\varphi_{\lambda, v}:H\backslash G\longrightarrow \C,\quad \varphi_{\lambda, v}(g)=\la\lambda, \pi(g)v\ra,
\]
for $g\in H\backslash G$. We call $\varphi_{\lambda, v}$ an $H$-matrix coefficient (with respect to $\lambda$). We say
\begin{enumerate}[(a)]
\item $(\pi, V)$ is relatively cuspidal if $\varphi_{\lambda, v}\in C_c^\infty(Z_GH\backslash G)$ for all $\lambda\in\Hom_H(\pi,\one)$ and all $v\in V$;
\item $(\pi, V)$ is relatively square integrable if $\varphi_{\lambda, v}\in L^2(Z_GH\backslash G)$ for all $\lambda\in\Hom_H(\pi,\one)$ and all $v\in V$;
\item $(\pi, V)$ is relatively tempered if for all $\epsilon>0$ we have $\varphi_{\lambda, v}\in L^{2+\epsilon}(Z_GH\backslash G)$ for all $\lambda\in\Hom_H(\pi,\one)$ and all $v\in V$.
\end{enumerate}
With the assumption that the residue characteristic of $F$ is odd, in \cite{KT-Cuspidal} Kato and Takano have shown that relative cuspidality is detected by the vanishing of the ``Jacquet-module" of $\lambda$, and in \cite{KT-Square} they have established the Casselman type criterion for relative square integrability in terms of exponents. Further, in \cite{Takeda}, the author has established the Casselman type criterion for relative temperedness.

We then make the following conjecture.
\begin{Conj}\label{C:main_conjecture}
Let $\pi$ be an $H$-distinguished irreducible admissible representation of $G$, so that $\Hom_H(\pi, \one)\neq 0$.
\begin{enumerate}[(I)]
\item The (conjectural) local Langlands parameter $\varphi_\pi:WD_F\to \widehat{G}$ of $\pi$ factors through
\[
\varphi_\pi:WD_F\longrightarrow  \widehat{G}_X\times\SL_2(\C)\xrightarrow{\;\widehat{\varphi}_X\;} \widehat{G},
\]
where the map $WD_F\to\SL_2(\C)$ is given by
\[
w\mapsto\begin{pmatrix}|w|^{\frac{1}{2}}&\\&|w|^{-\frac{1}{2}}\end{pmatrix}
\]
for all $w\in WD_F$.
\item Assume (I) holds.
\begin{enumerate}[(a)]
\item If the image of $WD_F\to \widehat{G}_X$ is not in a proper Levi of $\widehat{G}_X$ and the $\SL_2(\C)$-factor of $WD_F$ maps trivially, then $\pi$ is relatively cuspidal.
\item $\pi$ is relatively square integrable if and only if the image of $WD_F\to \widehat{G}_X$ is not in a proper Levi of $\widehat{G}_X$.
\item $\pi$ is relatively tempered if and only if the image of $WD_F\to \widehat{G}_X$ is bounded modulo center.
\end{enumerate}
\end{enumerate}
\end{Conj}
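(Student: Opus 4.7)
The plan is to deduce Part (I) from the asymptotic structure of $H$-matrix coefficients, and then to derive Part (II) from Part (I) by a direct translation through the known Casselman-type criteria.

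For Part (I), I would first decompose the restriction $\varphi_\pi|_{W_F}$ into a bounded tempered part and an unramified part of the form $w\mapsto|w|^{\nu_\pi}$, where $\nu_\pi\in X_*(T^\vee)\otimes\R$ records the exponents of $\pi$ via the local Langlands correspondence (LLC). The key input is the theorem of Kato-Takano in \cite{KT-Square}, which says that for $H$-distinguished $\pi$, every exponent of $\pi$ along every $\theta$-split parabolic $P=MN$ lies in the closure of the negative chamber of the $\theta$-split part of $A_M$. Translated through LLC, this forces $\nu_\pi$ to land in the Lie algebra of $T^{\vee-}$, so that the unramified part of $\varphi_\pi$ factors through $T^{\vee-}\subseteq G_X^\vee$. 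For the bounded tempered part, I would reduce via Langlands quotients to the case where $\pi$ is relatively cuspidal, and then argue by induction on the $\theta$-split rank that its tempered parameter factors through the subgroup generated by $G^{\vee-}$ and the image of the principal $\SL_2$ of $G^{\vee+}$; the base case is essentially the trivial-representation statement that the author proves unconditionally. The already-established commutation of $\varphi^{\vee+}(\SL_2(\C))$ with $\varphi^{\vee-}(G^{\vee-})$ is what finally lets the two factors be packaged into a single map out of $G_X^\vee\times\SL_2(\C)$.

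Part (II) would then follow by translation. For (a): if the $\SL_2$-factor of $\varphi_\pi$ is trivial, $\varphi_\pi$ is tempered, and non-containment of its image in a proper Levi of $G_X^\vee$ corresponds, under LLC and the factorization of (I), to vanishing of the ``Jacquet module'' of $\lambda$ along every proper $\theta$-split parabolic, which by \cite{KT-Cuspidal} is equivalent to relative cuspidality. For (b) and (c), I would apply the Casselman-type criteria of \cite{KT-Square} and \cite{Takeda} directly: the position of the exponents in the relevant $\theta$-split chambers is precisely reflected in whether the image of $WD_F$ in $G_X^\vee$ is contained in a proper Levi, or is bounded modulo center.

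The main obstacle is Part (I). Beyond the fact that LLC itself is conjectural for general $G$, the real difficulty is the compatibility of the Kato-Takano exponent theorem with the Weil-Deligne parameter of $\pi$: one must know, in a form strong enough to read off the image of $W_F$ inside $T^\vee$, that the $\theta$-split Casselman exponents literally match the cocharacter $w\mapsto|w|^{\nu_\pi}$ extracted from $\varphi_\pi$. A second delicate point is isolating the bounded tempered part inside $G^{\vee+}$ rather than elsewhere in $G^\vee$, since Kato-Takano controls only the real part of the exponent; pinning down the bounded part requires relating $\theta$-involutions on Levi subgroups to the embedding $G^{\vee+}\subseteq G^\vee$ functorially under parabolic descent, and this is where a genuinely new idea is needed. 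Consistency with the examples in \cite{KT-Cuspidal,KT-Square,Lagier,Takeda} and with the Sakellaridis-Venkatesh framework of \cite{SV} would serve as essential sanity checks throughout.
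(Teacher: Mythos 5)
This statement is a conjecture: the paper offers no proof of it, only consistency checks against known examples and an unconditional proof of the single special case that the parameter of the trivial representation factors through $\varphi_X^\vee$ (Theorem \ref{T:trivial_parameter}). So there is nothing in the paper for your proposal to match, and the proposal itself does not close the gap. The central difficulty, which your sketch elides, is that the Kato--Takano invariants are attached to the pair $(\pi,\lambda)$, not to $\pi$ alone: relative cuspidality, relative square integrability, and the ``Jacquet module of $\lambda$'' are defined via the $H$-matrix coefficients $\varphi_{\lambda,v}$ and the descent of the period $\lambda$ along $\theta$-split parabolics. There is no established dictionary translating these period-theoretic exponents into the Weil--Deligne parameter $\varphi_\pi$; asserting that ``non-containment in a proper Levi of $G_X^\vee$ corresponds to vanishing of the Jacquet module of $\lambda$'' is a restatement of the conjecture, not a derivation from \cite{KT-Cuspidal}. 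The same objection applies to your Part (I): the Kato--Takano exponent theorem constrains only the real parts of exponents along $\theta$-split parabolics, which at best locates an unramified twist near $T^{\vee-}$; it says nothing about the bounded part of $\varphi_\pi|_{W_F}$ or the monodromy $\SL_2$, so it cannot force the full parameter into the image of $G_X^\vee\times\SL_2(\C)$. Your proposed induction on $\theta$-split rank also lacks the compatibility of $\theta$ with parabolic descent on the dual side that it would need, as you yourself note.

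Two further cautions. First, the biconditionals in (II-b) and (II-c) require converse directions that no exponent criterion supplies without already assuming (I) together with a strong multiplicity statement for periods; and the paper itself records that the converse of (II-a) fails in the group case because an $L$-packet can contain both a supercuspidal and a non-supercuspidal discrete series, which shows that any argument pinning relative cuspidality to the parameter alone must fail at the level of individual representations. Second, the local Langlands correspondence for general split $G$ is itself conjectural, so even the objects your argument manipulates are not available unconditionally. What your outline does capture correctly is the intended heuristic --- that the $\theta$-split exponents should be governed by $G_X^\vee$ and the $\theta$-invariant part by the principal $\SL_2$ of $G^{\vee+}$ --- and the commutation of $\varphi^{\vee+}(\SL_2(\C))$ with $\varphi^{\vee-}(G^{\vee-})$ is indeed the structural fact that makes the conjecture well posed. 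But as a proof it is not viable; the honest status of the statement is that it is open.
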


We will see that the converse of Conjecture (I) does not always hold; namely even if $\pi$ has an $L$-parameter $\varphi:WD_F\to \widehat{G}$ factoring through $\widehat{\varphi}_X$, it could be the case that $\pi$ is not $H$-distinguished. However, such examples seem to be quite degenerate and presumably the converse of Conjecture (I) almost always holds. Also the converse of Conjecture (II-a) fails already in the so-called group case due to the existence of an $L$-packet which contains a supercuspidal representation and a non-supercuspidal discrete series representation at the same time. But this seems to be the only obstruction to the converse of Conjecture (II-a), and presumably it can be modified by considering all the members of the $L$-packet, although at this moment the author does not know enough examples to make any precise suggestion to modify it. At any rate, we verify that our conjectures are consistent with numerous known examples and conjectures.

\quad

Note that the trivial representation $\one$ of $G$ is $H$-distinguished for any $H\subseteq G$. Hence by Conjecture (I) the $L$-parameter of $\one$ must factor through $\widehat{\varphi}_X$ for any symmetric variety $X=H\backslash G$. We prove this assertion in Theorem \ref{T:trivial_parameter}.

\quad

\begin{comment}
Let us mention that after the first version of the paper was posted to arXiv, Y.\ Sakellaridis brought to our attention that our dual group $\widehat{G}_X$ is most likely the same as the dual group by Nadler \cite{Nadler}, which the author was not familiar with at that time. It is our understanding that our $\widehat{G}_X$ is indeed the same as the group denoted by $\widehat{H}$ in \cite{Nadler}, and Nadler proved that $\widehat{H}\subseteq\widehat{G}$. However, his method is completely different from ours in that he first constructed a subgroup $\widehat{H}\subseteq\widehat{G}$ by algebro-geometric methods and then showed that this $\widehat{H}$ is indeed the reductive group whose root datum is equal to the $\theta$-split root datum.

\quad

\end{comment}

The following is the overall structure of the paper. In the next section (Section 2), we review the method called the folding of a root datum by following \cite{Knop_Schalke}. In Section 3, after reviewing some generalities on involutions on root data, we construct the group $G^+$ whose root datum is the $\theta$-invariant root datum. In Section 4, we discuss the notion of the Dynkin diagram of an involution $\theta$ and the diagram of each $\theta$-non-invariant simple root, and illustrate how this theory works by using some of the important examples. In Section 5, we construct the group $G^-$ whose root datum is (the reduced form of) the $\theta$-split root datum. In Section 6, we prove the commutativity of $G^-$ and the image of the principal $\SL_2$ homomorphism of $G^+$. In Section 7, we define our dual group $\widehat{G}_X$ and the homomorphism $\widehat{\varphi}_X:\widehat{G}\times\SL_2(\C)\to\widehat{G}$. In Section 8, we prove that the $L$-parameter of the trivial representation $\varphi_{\one}$ always factors through $\widehat{\varphi}_X$. In Section 9, we discuss some instances of the distinguished representations and show that they are consistent with our Conjectures. Finally, in Appendix, we give details of rank one involutions.

\quad

\begin{center}{\bf Notation and assumptions}\end{center}

We let $F$ be a nonarchimedean local field of characteristic 0, and $WD_F=W_F\times \SL_2(\C)$ its Weil-Deligne group. For each $w\in WD_F$, we denote its norm by $|w|$. For a (split) reductive group $G$ over $F$, we let $\widehat{G}$ be the Langlands dual group of $G$. We usually identify $G$ with its $F$-points $G(F)$, and denote the center of $G$ by $Z_G$. By an $F$-involution $\theta$ on $G$, we mean a group homomorphism $\theta:G\to G$ defined over $F$ such that $\theta^2=1$, and we denote its fixed points by $H$. For each $\varepsilon\in G$, we let $\Int(\varepsilon)$ be the automorphism on $G$ defined by $g\mapsto \varepsilon g\varepsilon^{-1}$. Note that $\Int(\varepsilon)$ is an involution if and only if $\varepsilon^2\in Z_G$.

We let $\Irr(G)$ be the set of equivalence classes of irreducible admissible representations of $G$, and denote by $\one$ the trivial representation of $G$. A representation $\pi\in\Irr(G)$ is said to be $H$-distinguished if $\Hom_H(\pi,\one)\neq 0$, and call each nonzero $\lambda\in\Hom_H(\pi,\one)$ an $H$-period. For each $\pi\in\Irr(G)$ we denote its contragredient by $\check{\pi}$.

Let $G$ be a split reductive group and $\Phi$ the set of roots with respect to a maximal torus $T\subseteq G$. For each $\alpha\in\Phi$, there is an isomorphism $u_\alpha:\Ga\to U_\alpha$ onto a unique closed subgroup $U_\alpha\subseteq G$ called the root subgroup of $\alpha$ such that $t u_\alpha(x) t^{-1}=u_\alpha(\alpha(t)x)$ for all $t\in T$ and $x\in\Ga$. The group $G$ is generated by $T$ and all the $U_\alpha$'s.

For the notation and convention of root systems, we follow the book by Knapp \cite{Knapp} unless otherwise stated, though it should be warned that in \cite{Knapp} the set of all roots is denoted by $\Delta$ whereas we denote by $\Delta$ the set of simple roots. See \cite[p.150]{Knapp} for classical groups and \cite[p.180-184]{Knapp} for exceptional groups. For example, for the root system of type $A_{n-1}$, we use $\{e_1,\dots,e_n\}$ and $\{e_1^\vee,\dots,e_n^\vee\}$ for the standard basis of the character lattice and the cocharacter lattice, respectively, and write $\alpha_i=e_i-e_{i+1}$ and $\alpha_i^\vee=e_i^\vee-e_{i+1}^\vee$ for the simple roots and coroots, respectively. Similarly for other types of root systems.

\quad

\begin{center}{\bf Acknowledgements}\end{center}

The author would like to thank Yiannis Sakellaridis, Chuijia Wang and Jiandi Zou for their comments on an earlier version of the paper. A part of the paper was completed when the author was visiting the National University of Singapore in the summer of 2023. He would like to thank their hospitality. The author was partially supported by the Sumitomo Foundation Fiscal 2024 Grant for Basic Science Research Projects J230603025, and  JSPS KAKENHI grant number 24K06648.
\quad

%%%%%%%%%%%%%%%%%%%%%%%%%%%%%%%%%%%%%%%%%%%%%%%%%%%%%%%%%%%%%%%%%%%

\section{Folding of a root datum}

%%%%%%%%%%%%%%%%%%%%%%%%%%%%%%%%%%%%%%%%%%%%%%%%%%%%%%%%%%%%%%%%%%%
In this section, we let $G$ be a split reductive group over an arbitrary field $k$ of $\operatorname{char} k\neq 2$. Here $k$ does not have to be our nonarchimedean local field $F$. Indeed, the case we have in mind is when $k=\C$ and $G$ is the Langlands dual group of a reductive group over $F$.

The main goal of this section is to describe the method called ``folding", which will be used to produce the two subgroups $\widehat{G}^+$ and $\widehat{G}^-$ discussed in the Introduction. 

\subsection{Additively closed root sub-datum}
Let $\Phi=(X, R, X^\vee, R^\vee)$ be a root datum, where $R$ and $R^\vee$ are the sets of roots and coroots, respectively. A quadruple $\Psi=(X, S, X^\vee, S^\vee)$ is said to be a root sub-datum of $\Phi$ if $\Psi$ is a root datum with $S\subseteq R$ and $S^\vee\subseteq R^\vee$. Further, $\Psi$ is said to be additively closed if $S=\Z S\cap R$; in other words if $\alpha, \beta\in S$ are such that $\alpha+\beta\in R$, then $\alpha+\beta\in S$.

\begin{Prop}\label{P:subgroup_generated_by_sub-datum}
Assume $\Phi$ is a root datum of a split reductive group $G$. Then a root sub-datum $\Psi$ of $\Phi$ generates a subgroup in $G$ whose root datum is $\Psi$ if and only if $\Psi$ is additively closed, in which case the subgroup is generated by $T$ and the root subgroups $U_\alpha$ for all $\alpha\in S$.
\end{Prop}

This is stated in the proof of \cite[Theorem 7.3]{Knop_Schalke} with the citation \cite{BDS}. Yet, this can be easily proven by using the well-known relations for $T$ and $U_\alpha$'s of the reductive group $G$. But, for $\alpha, \beta\in S$ such that $\alpha\neq \pm\beta$, there is a commutator relation
\begin{equation}\label{E:commutator_relation}
u_\alpha(x)u_\beta(y)u_\alpha(x)^{-1}u_\beta(y)^{-1}=\prod_{\substack{i\alpha+j\beta\in R\\ i, j>0}}u_{i\alpha+j\beta}(c_{\alpha, \beta, i, j}x^iy^j),
\qquad(x, y\in k),
\end{equation}
where $c_{\alpha, \beta, i, j}\in k$ is known as the structure constant. This is why one needs $\Psi$ to be additively closed.

For each subset $\Sigma\subseteq R$, we set
\[
\Sigma^{ac}=\Z\Sigma\cap R=\{\sum_{n_\alpha\in\Z} n_\alpha\alpha\st \alpha\in \Sigma\}\cap R,
\]
and call it the {\it additive closure} of $\Sigma$.
\begin{Lem}\label{L:additive_closure_is_root_datum}
With the above notation, the quadruple $(X, \Sigma^{ac}, X^\vee, \Sigma^{ac\vee})$ is an additively closed root sub-datum, where $\Sigma^{ac\vee}$ is the set of the coroots corresponding to the roots in $\Sigma^{ac}$.
\end{Lem}
\begin{proof}
By definition, $\Sigma^{ac}$ is additively closed. Hence it remains to show that it is indeed a root datum. First we show that $\Sigma^{ac}$ is closed under the root reflections. But each $\alpha\in\Sigma^{ac}$ is written as $\alpha=\sum n_i\alpha_i$ for $n_i\in\Z$ and $\alpha_i\in\Sigma$, and hence for each $\beta\in\Sigma^{ac}$ we have
\[
s_{\alpha}(\beta)=\beta-\la \beta, \alpha^\vee\ra\alpha=\beta-\sum n_i\la \beta, \alpha^\vee\ra\alpha_i\in \Z\Sigma\cap R=\Sigma^{ac}.
\]
To see $\Sigma^{ac\vee}$ is closed under the coroot reflections, use
\[
s_{\alpha^\vee}(\beta^\vee)=s_{\alpha}(\beta)^\vee
\]
for all coroots $\alpha^\vee, \beta^\vee\in R$. (See \cite[Lemma 3.2.4]{Conrad-Gabber-Prasad}.)
\end{proof}

We also call the root datum $(X, \Sigma^{ac}, X^\vee, \Sigma^{ac\vee})$ the {\it additive closure} of $\Sigma$.

\quad

Even if a root sub-datum $\Psi=(X, S, X^\vee, S^\vee)$ is additively closed, its dual $\Psi^\vee=(X^\vee, S^\vee, X, S)$ does not have to be additively closed. For example, if $G=\Sp_4$ and $S$ is the set of all long roots, then $S$ is additively closed but $S^\vee$, which is the set of all short coroots, is not additively closed. Indeed, in this case, $(X, S, X^\vee, S^\vee)$ is the root datum of $\SL_2\times\SL_2$ and we have the natural embedding $\SL_2\times\SL_2\to\Sp_4$. But the dual $(X^\vee, S^\vee, X, S)$ is the root datum of $\PGL_2\times\PGL_2$ and there is no embedding $\PGL_2\times\PGL_2\to\Sp_4^\vee=\SO_5$.

Yet, let us mention the following, though we do not use it for this paper.
\begin{Lem}
Let $\Psi$ be an additively closed root sub-datum of a root datum $\Phi$. If $\Phi$ is simply laced then the dual $\Psi^\vee$ is also additively closed.
\end{Lem}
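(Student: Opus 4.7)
The plan is to exploit the defining feature of a simply laced root datum: within each irreducible component of $R$, all roots have a common squared length $\ell^2$, so under the identification of $X_\R$ with $X^\vee_\R$ induced by any Weyl-invariant inner product, the coroot $\alpha^\vee$ equals the fixed scalar multiple $(2/\ell^2)\alpha$. Consequently an integer relation $\sum n_i\alpha_i^\vee=\gamma^\vee$ supported in a single irreducible component of $R^\vee$ is equivalent, after dividing by the common scalar, to the identical relation $\sum n_i\alpha_i=\gamma$ holding in $X$.

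My strategy is as follows. Suppose $\gamma^\vee\in\Z S^\vee\cap R^\vee$; I want to show $\gamma^\vee\in S^\vee$. Write $\gamma^\vee=\sum_i n_i\alpha_i^\vee$ with $\alpha_i\in S$ and $n_i\in\Z$, and let $R_0$ denote the irreducible component of $R$ containing $\gamma$. Using the orthogonal decomposition of $X_\R^\vee$ along the irreducible components of $R^\vee$, I would project this equation onto the span of $R_0^\vee$: projection fixes $\gamma^\vee$ and kills every $\alpha_i^\vee$ with $\alpha_i\notin R_0$, so I am reduced to a relation
\[
\gamma^\vee=\sum_{\alpha_i\in R_0} n_i\alpha_i^\vee
\]
supported entirely within $R_0^\vee$. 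The scalar correspondence for $R_0$ then converts this into $\gamma=\sum_{\alpha_i\in R_0} n_i\alpha_i$, an equation among elements of $X$. Hence $\gamma\in\Z S\cap R$, and additive closure of $\Psi$ forces $\gamma\in S$, whence $\gamma^\vee\in S^\vee$ as desired.

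The only technical point worth flagging is that the identification $X_\R\cong X_\R^\vee$ lives at the level of real vector spaces rather than lattices, so one must confirm that the $X_\R$-equation produced by the scalar correspondence actually descends to an equation in $X$; this is immediate because every term in the equation already lies in $X$. I do not anticipate a serious obstacle beyond this bookkeeping, and in particular I do not need to reduce to the irreducible case as a separate preliminary step, since the projection argument above handles the reducible situation uniformly.
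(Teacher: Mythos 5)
Your argument is correct, and it takes a somewhat different route from the paper's. The paper disposes of the lemma in one line by working with the pairwise form of additive closure and citing the standard fact that in a simply laced system $(\alpha+\beta)^\vee=\alpha^\vee+\beta^\vee$ whenever $\alpha$, $\beta$, and $\alpha+\beta$ are all roots (Springer, 10.2.2). You instead verify the lattice-theoretic form $S^\vee=\Z S^\vee\cap R^\vee$ directly: project onto the irreducible component containing $\gamma^\vee$, then use the fact that within a simply laced component the assignment $\alpha\mapsto\alpha^\vee$ extends to a linear isomorphism of real spans (namely $2/\ell^2$ times the inner-product identification), so any integral relation among coroots transfers verbatim to roots, where additive closure of $\Psi$ finishes the job. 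Both proofs rest on the same phenomenon --- constancy of root lengths makes coroots proportional to roots --- but yours is more self-contained and establishes the stronger $\Z$-span version of closure outright, whereas the paper's citation handles only sums of two roots and implicitly leans on the equivalence of the two formulations of ``additively closed'' stated earlier in the section. The one point you should make explicit is why the linear map $\phi$ with $\phi(\alpha)=\alpha^\vee$ is well defined and injective on $\Span_{\R}(R_0)$: this follows because the canonical pairing restricts to a perfect pairing between $\Span_{\R}(R)$ and $\Span_{\R}(R^\vee)$, and only with that in hand is the cancellation $\phi(\gamma)=\phi\bigl(\sum n_i\alpha_i\bigr)\Rightarrow\gamma=\sum n_i\alpha_i$ legitimate.
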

\begin{proof}
The lemma follows from the following assertion: if $\Phi$ is simply laced and $\alpha$ and $\beta$ are two distinct roots such that $\alpha+\beta$ is also a root, then $(\alpha+\beta)^\vee=\alpha^\vee+\beta^\vee$. (See \cite[10.2.2, p.177]{Springer}.)
\end{proof}

For our purposes, we need the following.
\begin{Lem}\label{L:additive_closure_dual}
Let $\Delta\subseteq R$ be a basis of the root system, and $\Pi\subseteq\Delta$ a subset. Then the dual $(\Pi^{ac})^\vee$ of the additive closure is also additively closed.
\end{Lem}
\begin{proof}
Elementary exercise.
\end{proof}

\subsection{Strongly orthogonal roots}
Two roots $\alpha$ and $\beta$ are said to be strongly orthogonal if
\[
\{\alpha, \beta\}^{ac}=\{\pm\alpha, \pm\beta\}.
\]
Strong orthogonality necessarily implies orthogonality. If $\alpha$ and $\beta$ are strongly orthogonal, then the subgroup generated by the additive closure $\{\alpha, \beta\}^{ac}$ is isogenous to $\SL_2\times\SL_2$. For example, two orthogonal long roots in $B_2$ are strongly orthogonal but two orthogonal short roots are not.

\subsection{Folding a root datum}
In this subsection, we review the method called ``folding a root datum", which is used to produce a map $\varphi:H\to G$ of reductive groups such that the map $H\to\varphi(H)$ is an isogeny. This method is a slight generalization of a well-known method (\cite[Section 10.3]{Springer}) and discussed in \cite[Section 4]{Knop_Schalke} in detail.

Let $(X(T), \Phi, X(T)^\vee, \Phi^\vee)$ be a root datum of a split reductive group $G$. Let $\Delta\subseteq \Phi$ be a set of simple roots, so that $(X(T), \Delta, X(T)^\vee, \Delta^\vee)$ is a based root datum.

\begin{Def}
Let $s:\Delta\to\Delta$ be an involution, which naturally induces an involution $s:\Delta^\vee\to\Delta^\vee$, so that $({^s\alpha})^\vee={^s(\alpha^\vee)}$, which we simply write $^s\alpha^\vee$. We call $s$ a {\it folding} if for all $\alpha, \beta\in\Delta$, we have
\begin{enumerate}[(a)]
\item $\la \alpha, {^s\alpha^\vee}\ra=0$, whenever $\alpha\neq {^s\alpha}$, and
\item $\la \alpha-{^s\alpha}, \beta^\vee+{^s\beta^\vee}\ra=0$.
\end{enumerate}
Note that the property $\la{^s\alpha}, {^s\beta^\vee}\ra=\la\alpha, \beta^\vee\ra$ implies (b).
\end{Def}

Let $A$ be a torus and $\varphi_A:A\to T$ a homomorphism with finite kernel, so that we have the map
\[
r:X(T)\longrightarrow X(A), \quad x\mapsto x\circ\varphi,
\]
given by the restriction via $\varphi_A$. We often write
\[
r(x)=\overline{x}.
\]
Note that the image of $r$ has a finite cokernel, which induces an injection
\[
r^\vee: X(A)^\vee\longrightarrow X(T)^\vee,
\]
which implies that $X(A)$ and $r^\vee(X(A)^\vee)$ are still dual to each other. We often identity $r^\vee(X(A)^\vee)$ with $X(A)^\vee$.

Now, let $s:\Delta\to\Delta$ be a folding. For each $\alpha\in \Delta$ we define
\[
\oalpha^\vee:=\begin{cases}\alpha^\vee&\text{if $\alpha={^s\alpha}$};\\
\alpha^\vee+{^s\alpha^\vee}&\text{otherwise},
\end{cases}
\]
and set
\[
\overline{\Delta}=r(\Delta)=:\{\oalpha\st\alpha\in\Delta\}
\qand\overline{\Delta}^\vee:=\{\oalpha^\vee\st\alpha\in\Delta\}.
\]
The following is the main theorem on folding.
\begin{Prop}\label{P:folding}
Let $s:\Delta\to\Delta$ be a folding such that
\begin{enumerate}[(i)]
\item $r(\alpha)=r({^s\alpha})$ for all $\alpha\in\Delta$, and
\item $\overline{\Delta}^\vee\subseteq X(A)^\vee$.
\end{enumerate}
Then the quadruple $(X(A), \overline{\Delta}, X(A)^\vee, \overline{\Delta}^\vee)$ is a reduced based root datum, and if $H$ is the corresponding split reductive group, there is a homomorphism
\[
\varphi:H\longrightarrow G
\]
with finite kernel which extends the map $\varphi_A:A\to T$ of the tori. If $\varphi_A$ is one-to-one, so is $\varphi$.
\end{Prop}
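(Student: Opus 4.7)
The plan is to split the proof into two pieces: first verify that $(X(A), \overline{\Delta}, X(A)^\vee, \overline{\Delta}^\vee)$ is a based root datum, then invoke the existence and isogeny theorems for split reductive groups to produce $H$ and the map $\varphi$.

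For the first piece, the core calculation is the Cartan matrix entry $\la \overline{\alpha}, \overline{\beta}^\vee\ra$ for $\alpha, \beta \in \Delta$, which I would treat by case analysis on whether $\alpha$ and $\beta$ are $s$-fixed. The diagonal case $\alpha = \beta$ with $\alpha \ne {}^s\alpha$ reduces, using the $r$/$r^\vee$ duality and folding condition (a), to
\[
\la r(\alpha), \alpha^\vee + {}^s\alpha^\vee \ra = \la \alpha, \alpha^\vee \ra + \la \alpha, {}^s\alpha^\vee \ra = 2 + 0 = 2,
\]
while the other diagonal case is immediate. The key off-diagonal verification is well-definedness: condition (i) forces $\overline{\alpha} = \overline{{}^s\alpha}$, so $\la \overline{\alpha}, \overline{\beta}^\vee\ra$ must be unchanged under $\alpha \leftrightarrow {}^s\alpha$, and a short expansion shows that this independence is exactly equivalent to
\[
\la \alpha - {}^s\alpha, \beta^\vee + {}^s\beta^\vee \ra = 0,
\]
which is folding condition (b). Condition (ii) then guarantees $\overline{\Delta}^\vee \subset X(A)^\vee$. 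One extends $\overline{\Delta}$ and $\overline{\Delta}^\vee$ to full root and coroot systems by applying the simple reflections, which by the Cartan calculation preserve $X(A)$ and $X(A)^\vee$, giving the root datum.

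Given the based root datum, Chevalley's existence theorem produces a split reductive group $H/k$ with $A$ as a maximal torus realizing it. To construct $\varphi: H \to G$ extending $\varphi_A$, I would invoke the isogeny theorem in its generalized form for finite-kernel homomorphisms between split reductive groups (as in \cite{Conrad-Gabber-Prasad}): such a map is specified by the character lattice map $r: X(T) \to X(A)$ together with a compatible prescription on root subgroups. Concretely, the simple root subgroup $U_{\overline{\alpha}}$ of $H$ maps to $U_\alpha \subset G$ when $\alpha = {}^s\alpha$, and diagonally into $U_\alpha \cdot U_{{}^s\alpha}$ otherwise; folding condition (a) forces $U_\alpha$ and $U_{{}^s\alpha}$ to commute in $G$, so this product is isomorphic to $\Ga \times \Ga$ and the diagonal embedding is well-defined. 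Finiteness of $\ker \varphi$ follows because $\ker \varphi \cap A = \ker \varphi_A$ is finite and $\varphi$ injects on each root subgroup; if $\varphi_A$ is injective, the same argument yields injectivity of $\varphi$.

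The hard part is this second step: one must check that the folding axioms translate precisely into the combinatorial input demanded by the generalized isogeny theorem, and verify the Chevalley relations in the setting where a single simple root subgroup of $H$ is sent into a product of two commuting root subgroups of $G$. The underlying ideas are standard, but the case-by-case bookkeeping for non-fixed simple roots is the most delicate part of the argument.
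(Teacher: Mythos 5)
Your argument is correct, and it is essentially the proof the paper relies on: the paper gives no argument of its own but simply cites \cite[Lemma 4.5]{Knop_Schalke}, whose content is exactly the folding construction you reconstruct (Cartan-integer verification via conditions (a) and (b), then the isogeny theorem with simple root subgroups mapped diagonally into products of commuting root subgroups). The only step worth making explicit is that for two \emph{simple} roots, orthogonality (condition (a)) implies their sum is not a root, which is what actually forces $U_\alpha$ and $U_{{}^s\alpha}$ to commute.
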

\begin{proof}
See \cite[Lemma 4.5]{Knop_Schalke}.
\end{proof}

It should be noted that, in the above proposition, since $r(\alpha)=r({^s\alpha})$, we can write
\[
\oalpha=\frac{1}{2}r(\alpha+{^s\alpha})
\]
for all $\oalpha\in\overline{\Delta}$.

%%%%%%%%%%%%%%%%%%%%%%%%%%%%%%%%%%%%%%%%%%%%%%%%%%%%%%%%%%%%%%%%%%%

\section{Root datum with involution}

%%%%%%%%%%%%%%%%%%%%%%%%%%%%%%%%%%%%%%%%%%%%%%%%%%%%%%%%%%%%%%%%%%%

We keep the notation of the previous section. In particular, we let $G$ be a split reductive group over an arbitrary filed $k$ of $\operatorname{char} k\neq 2$.

\subsection{Involution on root datum}

Fix a maximal torus $T\subseteq G$, and let
\[
\Phi(G, T)=(X(T), \Phi, X(T)^\vee, \Phi^\vee)
\]
be the root datum of $G$ with respect to $T$. In particular, it is reduced. We do not assume that $\Phi(G, T)$ is irreducible. We assume that $\Phi(G, T)$ is equipped with an involution $\theta$. 

By the isomorphism theorem, $\theta$ lifts to an automorphism $\tilde{\theta}$ on the group $G$. However, $\tilde{\theta}$ does not have to be an involution; namely it may happen that $\tilde{\theta}^2\neq 1$. The conditions for the lift $\tilde{\theta}$ to be an involution are discussed in \cite[Section 4.1, p.36]{Helminck_commuting_pair}. Yet, for our purposes, all we need is an involution on the root datum.

Certainly, the same involution can be viewed as an involution on the dual root datum
\[
\Phi(G, T)^\vee=(X(T)^\vee, \Phi^\vee, X(T), \Phi),
\]
which we denote by $\theta^\vee$ and call it the dual of $\theta$.

\subsection{Two tori}

A $k$-split torus $A\subseteq T$ is said to be $\theta$-invariant if $\theta(t)=t$ for all $t\in A$, and said to be $\theta$-split if $\theta(t)=t^{-1}$ for all $t\in A$. We set
\[
T^+:=\{t\in T\st \theta(t)=t\}^\circ\qand T^-:=\{t\in T\st \theta(t)=t^{-1}\}^\circ.
\]
It is easy to see that the map
\[
T^+\times T^-\longrightarrow T
\]
is an isogeny.

All of $T, T^+, T^-$ are closed under the action of $\theta$, and hence
$\theta$ naturally acts on the groups of rational characters
\[
X(T)=\Hom(T, \Gm),\quad
X(T^+)=\Hom(T^+, \Gm),\quad X(T^-)=\Hom(T^-, \Gm)
\]
and on the groups of rational cocharacters
\[
X(T)^\vee=\Hom(\Gm, T),\quad
X(T^+)^\vee=\Hom(\Gm, T^+),\quad X(T^-)^\vee=\Hom(\Gm, T^-)
\]
of the respective tori. The natural pairing
\[
\la-,-\ra:X(T)\times X(T)^\vee\longrightarrow \Z
\]
restricts to the pairings
\[
X(T^+)\times X(T^+)^\vee\longrightarrow\Z \qand X(T^-)\times X(T^-)^\vee\longrightarrow \Z,
\]
which we also denote by $\la-,-\ra$.

By the definition of the action of $\theta$ it is immediate that
\begin{equation}\label{E:theta_preserves_length}
\la \theta x, \theta x^\vee\ra=\la x, x^\vee\ra
\end{equation}
for all $x\in X(T)$ and $x^\vee\in X(T)^\vee$. In particular, $\theta$ preserves the lengths of roots and coroots.

\subsection{Admissible involution}
An involution $\theta$ on the root datum $\Phi(G, T)$ is called {\it admissible} if the corresponding automorphism $\tilde{\theta}$ on $G$ is also an involution (namely $\theta$ can be lifted) and $T^-$ is a {\it maximal} $\tilde{\theta}$-split torus, namely of maximum rank among the $\tilde{\theta}$-split tori. An involution $\theta$ fails to be admissible in two ways: $\theta$ cannot be lifted, or even if $\theta$ is lifted, $T^-$ is not a maximal $\tilde{\theta}$-split torus. For example, the involution to be discussed in Example \ref{Ex:A_2} is not admissible even though it can be lifted. (See \cite[Proposition 4.4 and Corollary 4.5, p.37]{Helminck_commuting_pair} more on conditions of liftability and admissibility.)

To clarify the difference between an admissible involution and an involution which can be merely lifted, let us make the following observation. If $\theta$ is lifted to an involution $\tilde{\theta}$ on $G$, one can find a possibly different torus $S\subseteq G$ so that the torus $S^-$ is a maximal $\tilde{\theta}$-split torus. Then $\tilde{\theta}$ descends to an involution on the root datum $\Phi(G, S)$. This involution on $\Phi(G, S)$ is admissible, yet it is not equivalent to the original involution $\theta$ on $\Phi(G, T)$ unless the original $\theta$ is admissible.

For our purposes, we might as well focus on those involutions whose dual involutions are admissible because we are interested in the dual involution of an involution on a reductive group $G$ over $F$. However, we will develop our construction as in general as possible. Hence we will not make any such assumption, though later we will have to exclude some of the involutions whose duals are not admissible.

\subsection{$\theta$-basis}
We set
\[
\Phi_0=\{\alpha\in\Phi\st \theta(\alpha)=\alpha\}\qand \Phi_0^{\vee}=\{\alpha^\vee\in\Phi\st \theta(\alpha^\vee)=\alpha^\vee\},
\]
namely the set of all $\theta$-invariant roots and coroots, respectively.

It is well-known that there is a basis $\Delta\subseteq\Phi$  so that the corresponding ordering has the property
\begin{equation}\label{E:positivity}
\alpha>0\quad\text{and}\quad \alpha\notin\Phi_0
\quad\Longrightarrow\quad\theta(\alpha)<0.
\end{equation}
We call this ordering a $\theta$-order and such basis $\Delta$ a $\theta$-basis. We fix such $\Delta$ and let $\Phi^+$ be the set of positive roots with respect to this ordering.

\subsection{$\theta$-invariant root sub-datum}

One can readily see that the quadruple
\[
\Phi(G, T)_0:=(X(T),\Phi_0,X(T)^\vee,\Phi_0^{\vee})
\]
is an additively closed root sub-datum of $\Phi(G, T)$ where the sets of simple roots and coroots are, respectively,
\[
\Delta_0:=\Delta\cap\Phi_0\qand\Delta_0^\vee:=\Delta^\vee\cap\Phi_0^{\vee}.
\]
We let $G^{++}$ be the split reductive group generated by this root datum inside $G$ (Proposition \ref{P:subgroup_generated_by_sub-datum}), so that we have the natural inclusion
\[
G^{++}\subseteq G,
\]
and the root datum of $G^{++}$ is $\Phi(G, T)_0$. Note that $G^{++}$ is the subgroup generated by $T$ and the root subgroups $U_\alpha$ for all $\alpha\in\Phi_0$.

Now, consider the restriction map $r:X(T)\to X(T^+)$. Then the induced map $\Delta_0\to r(\Delta_0)$ is a bijection, via which we often identify these two sets, and we have the identity $r^\vee(\Delta_0^\vee)=\Delta_0^\vee$. Hence by choosing $s:\Delta_0\to\Delta_0$ to be the identity map, which is certainly a folding, we can apply Proposition \ref{P:folding} and obtain a based root datum
\[
\Phi(G, T^+):=(X(T^+),\Delta_0,X(T^+)^\vee,\Delta_0^\vee)
\]
and an inclusion
\[
G^{+}\subseteq G,
\]
where $G^{+}$ is the reductive group whose root datum is $\Phi(G, T^+)$. Note that the group $G^+$ is generated by $T^+$ and the root subgroups $U_\alpha$ for all $\alpha\in\Phi_0$, and the map $G^{+}\to G$ given by folding is an inclusion. We call the root datum $\Phi(G, T^+)$ the $\theta$-invariant root datum.

Let us note that the group $G^{++}$ plays only an auxiliary role to make use of the method of folding and to obtain the inclusion $G^+\subseteq G$. Also note that the main difference between $G^{++}$ and $G^{+}$ is that $T\subseteq G^{++}$ and $T^+\subseteq G^+$ (but $T\nsubseteq G^+$ in general).

Let us mention the following lemma.
\begin{Lem}\label{L:invariant_non-invariant_roots_orthogonal}
For all $\alpha\in\Phi$ and $\gamma\in\Phi_0$, we have
\[
\la\alpha-\theta\alpha, \gamma^\vee\ra=0.
\]
\end{Lem}
\begin{proof}
\[
\la\alpha-\theta\alpha, \gamma^\vee\ra=\la\alpha, \gamma^\vee\ra-\la\theta\alpha, \gamma^\vee\ra=\la\alpha, \gamma^\vee\ra-\la\alpha, \theta\gamma^\vee\ra=\la\alpha, \gamma^\vee\ra-\la\alpha, \gamma^\vee\ra=0.
\]
\end{proof}

\subsection{A lemma}

Let us then quote the following lemma, which will be frequently used in this paper.
\begin{Lem}\label{L:theta^*}
Let $w_0$ be the longest element in the Weyl group of the $\theta$-invariant root datum $\Phi(G, T^+)$. For each involution $\theta$, there exists a possibly trivial automorphism $\theta^*$ of the based root datum $(X(T), \Delta, X(T)^\vee, \Delta^\vee)$ such that for each root $\alpha\in\Phi$, one can write
\[
-\theta\alpha=(\theta^*\circ w_0)\alpha.
\]
In particular, for each $\alpha\in\Delta\smallsetminus\Delta_0$ we have
\[
-\theta\alpha=\theta^*\alpha+\gamma\quad\text{for some $\gamma\in\Span_\Z(\Delta_0)$}.
\]

Here, if $\Phi(G, T)$ is irreducible, then $\theta^*$ is an automorphism of the Dynkin diagram. If it is of the form $\Phi_1\coprod\Phi_2$ with $\Phi_1, \Phi_2$ irreducible and $\theta(\Phi_1)=\Phi_2$, then $\theta^*$ exchanges the Dynkin diagrams of $\Phi_1$ and $\Phi_2$, in which case $\Phi_0=\emptyset$, so $w_0=\operatorname{id}$ and $\theta=-\theta^*$.
\end{Lem}
\begin{proof}
See \cite[2.8]{Helminck_commuting_pair} and  \cite[1.7]{Helminck-Helmink}. It should be notated that in \cite{Helminck-Helmink}, this lemma is stated as $\gamma\in\Phi_0$ (in stead of $\gamma\in\Span_\Z(\Delta_0)$). Apparently, this is a typographical error.
\end{proof}

\subsection{Principal $\SL_2$ of $G^+$}
Let
\[
\varphi^+:\SL_2\longrightarrow G^+
\]
be the principal $\SL_2$-homomorphism of $G^+$, so that
\[
\varphi^+(\begin{pmatrix}t&\\&t^{-1}\end{pmatrix})=2\rho_0^\vee(t),
\]
where
\[
2\rho_0^\vee=\sum_{\gamma\in\Phi_0^+}\gamma^\vee
\]
is the sum of $\theta$-invariant positive coroots. (For the principal $\SL_2$ homomorphism, see, for example, \cite[Section 2]{Gross}.)

Lemma \ref{L:invariant_non-invariant_roots_orthogonal} gives us
\begin{equation}\label{E:rho_0_orthogonal}
\la\alpha-\theta\alpha, 2\rho_0^\vee\ra=0\quad\text{for all $\alpha\in\Phi$}.
\end{equation}

Noting that $2\rho_0^\vee:\Gm\to T^+$, we set
\[
T^0:=\text{the image of $2\rho_0^\vee$},
\]
which is a 1-dimensional subtorus of $T^+$.

\subsection{$\theta$-split root datum}
Next, we consider the root datum associated with the $\theta$-split torus $T^-$ by the restriction map
\[
p:X(T)\to X(T^-).
\]
Let
\[
\overline{\Phi}=p(\Phi)\smallsetminus\{0\}=p(\Phi\smallsetminus\Phi_0).
\]
It has been proven by Helminck and Wang (\cite{Helmink-Wang}) that, by tensoring with $\Q$, $\overline{\Phi}$ is a root system in $X(T^-)\otimes \Q$ with a basis
\[
\overline{\Delta}:=p(\Delta)\smallsetminus\{0\}=p(\Delta\smallsetminus\Delta_0).
\]
This gives rise to a root datum
\begin{equation}\label{E:theta-split-root-datum}
\overline{\Phi}(G, T^-):=(X(T^-),\overline{\Phi}, X(T^-)^\vee, \overline{\Phi}^\vee),
\end{equation}
which we call the $\theta$-split root datum.

The $\theta$-split root datum $\overline{\Phi}(G, T^-)$ is not always reduced as in the following lemma.
\begin{Lem}\label{L:reduced_split_root_datum}
If there is a root $\alpha\in\Phi\smallsetminus\Phi_0$ such that $\alpha-\theta\alpha$ is also a root, then the $\theta$-split root datum $\overline{\Phi}(G, T^-)$ is non-reduced.
\end{Lem}
\begin{proof}
If $\alpha-\theta\alpha$ is a root, then $\overline{\alpha-\theta\alpha}=\oalpha+\oalpha=2\oalpha$ because $\overline{-\theta\alpha}=\oalpha$.
\end{proof}

\begin{Rmk}
The converse of the lemma also holds. Yet, since we will not need the converse for our purposes, we will leave the proof to the reader.
\end{Rmk}

Assume $\overline{\Phi}(G, T^-)$ is reduced. Our major goal is to produce a subgroup $G^-\subseteq G$ such that its root datum is precisely $\overline{\Phi}(G, T^-)$ and show that $G^-$ commutes with the image $\varphi^+(G^+)$ of the principal $\SL_2$ of $G^+$. If $\overline{\Phi}(G, T^-)$ is not reduced, then  it has an irreducible component of type $BC_n$ and we will make it reduced by discarding the ``short roots". Then the root datum of $G^-$ is this reduced root datum.

%%%%%%%%%%%%%%%%%%%%%%%%%%%%%%%%%%%%%%%%%%%%%%%%%%%%%%%%%%%%%%%%%%%

\section{Dynkin diagram of $\theta$ and some examples}

%%%%%%%%%%%%%%%%%%%%%%%%%%%%%%%%%%%%%%%%%%%%%%%%%%%%%%%%%%%%%%%%%%%

We keep the notation and assumption from the previous section. In particular, $\Phi(G, T)$ is a root datum of a reductive group over $k$ with an involution $\theta$ on $\Phi(G, T)$. We assume that a $\theta$-basis $\Delta$ is chosen, so that $(X(T), \Delta, X(T)^\vee, \Delta^\vee)$ is a based root datum with a $\theta$-order. We let $\Delta_0$ be the set of $\theta$-invariant simple roots.

In this section, we review the notion of Dynkin diagram of $\theta$, which is also called the index of $\theta$ in \cite{Helminck_commuting_pair}, and discuss some instructive examples. We also define the concept of the diagram of $\alpha$ for $\alpha\in\Delta\smallsetminus\Delta_0$.

\subsection{Dynkin diagram of $\theta$}
Assume $\Phi(G, T)$ is irreducible, so that the corresponding $\theta^*$ of Lemma \ref{L:theta^*} is an automorphism of the Dynkin diagram of $\Phi(G, T)$. For the given $\theta$-basis, we draw the corresponding Dynkin diagram with black dots for the simple roots in $\Delta_0$ and white dots for the simple roots in $\Delta\smallsetminus\Delta_0$. Also we sometimes incorporate the action of $\theta^*$ by using a two headed arrow, if necessary. An example of such Dynkin diagram is
\begin{equation}\label{Ex:E_6}
\dynkin[labels={,\alpha_2,\alpha_3,\alpha_4,\alpha_5,}, labels*={\alpha_1,,,,,\alpha_6}, involutions={[in=-140, out=-40]1<below>[\theta^*]6}]E{oo***o}
\end{equation}
Here, the three roots in the middle are $\theta$-invariant, namely
\[
\Delta_0=\{\alpha_3, \alpha_4, \alpha_5\},
\]
and $\theta^*$ is the diagram automorphism that flips the diagram along the middle line, namely
\[
\theta^*\alpha_1=\alpha_6\qand \theta^*\alpha_3=\alpha_5,
\]
and the other roots are fixed. We call such diagram the Dynkin diagram of $\theta$. Up to isogeny, the conjugacy classes of the involutions are classified by such Dynkin diagrams. (See Sections 2, 3 and 4 of \cite{Helminck_commuting_pair} for more detail, though the Dynkin diagram of $\theta$ is called the index of $\theta$ there.)

We say that the involution $\theta$ is of rank one if the $\theta$-split root datum $\overline{\Phi}(G, T^-)$ is of rank one in the sense that $\overline{\Phi}$ spans only one dimensional space. All the rank one involutions (up to isogeny) are listed in Table \ref{T:table} in Appendix \ref{A:table}, which is essentially taken from \cite[\textsc{Table} I, p.39]{Helminck_commuting_pair}. (Let us note that in \cite{Helminck_commuting_pair}, a rank one involution is called a restricted rank one involution.)

For each $\alpha\in\Delta\smallsetminus\Delta_0$, we set
\[
\Phi_\alpha=\text{ additive closure of $\{\alpha, \theta^*\alpha\}\cup\Delta_0$}.
\]
This gives rise to a root sub-datum of $\Phi(G, T)$ which is closed under the involution $\theta$, and the set $\{\alpha, \theta^*\alpha\}\cup\Delta_0$ is a $\theta$-basis.

\begin{Def}\label{D:diagram_of_root}
For $\alpha\in\Delta\smallsetminus\Delta_0$, we define the {\it diagram of $\alpha$} to be the Dynkin diagram of the irreducible component of $\Phi_\alpha$ containing $\alpha$, with the proviso that we consider the $A_1\times A_1$ diagram
\[
\dynkin[involutions={1<below>[\theta^*]2}, edge/.style={white}]A{oo}
\]
as irreducible. These diagrams are precisely the ones listed in Table \ref{T:table}.
\end{Def}

The diagram of each simple root $\alpha$ is in a way a building block of a given involution, and the involution on a root datum is to be analyzed in terms of the diagrams of the simple roots. For instance, in the above $E_6$ example \eqref{Ex:E_6}, the diagram consists of the following two subdiagrams
\[
\dynkin[labels={\alpha_2, \alpha_4, \alpha_5, \alpha_3}]D{o***}\qquad\qand\qquad
\dynkin[labels*={\alpha_1, \alpha_3, \alpha_4, \alpha_5, \alpha_6}, involutions={1<below>[\theta^*]5}] A{o***o}
\]
where the first one is the diagram of $\alpha_2$ and the second one is that of $\alpha_1$ as well as of $\alpha_6$. In light of Lemma \ref{L:theta^*}, one can then tell how $\theta$ acts on each simple root.

\subsection{Some examples of Dynkin diagrams}\label{SS:examples_Dynkin_diagrams}
Let us discuss some of the important examples of Dynkin diagrams of $\theta$ for rank one cases. All the rank one cases are discussed in Appendix \ref{A:table} and summarized in Table \ref{T:table}.

\begin{Ex}\label{Ex:A_1}
Let us consider the simplest case
\[
\dynkin[labels={\alpha}] A{o}
\]
of type  $A_1$. This is the case if and only if $\theta\alpha=-\alpha$. If an involution has this type of simple root $\alpha$, then $\alpha$ is not connected to any of the $\theta$-invariant roots in the Dynkin diagram. For later purposes, let us mention that $\alpha-\theta\alpha=2\alpha$, which is not a root, but
\[
\frac{1}{2}(\alpha-\theta\alpha)\in\Phi.
\]
\end{Ex}

\begin{Ex}\label{Ex:A_1_x_A_1}
Consider the following $A_1\times A_1$ example
\[
\dynkin[labels={\alpha_1,\alpha_2}, involutions={1<below>[\theta^*]2}, edge/.style={white}]A{oo}
\]
Certainly in this case, $-\theta\alpha_1=\theta^*\alpha_1=\alpha_2$ and
\[
\alpha_i-\theta\alpha_i\notin\Phi\qand \frac{1}{2}(\alpha_i-\theta\alpha_i)\notin\Phi
\]
for $i=1,2$.
\end{Ex}

\begin{Ex}\label{Ex:A_2}
Consider the following $A_2$ example
\[
\dynkin[labels={\alpha, \beta}] A{o*}
\]
Since $-\theta\alpha=w_0\alpha$, where $w_0$ is the longest Weyl element of the $A_1$ system consisting of the $\theta$-invariant root $\beta$, namely the simple reflection associated with $\beta$, we have
\[
-\theta\alpha=\alpha+\beta.
\]
For later purposes, let us mention that, since $\alpha-\theta\alpha=2\alpha+\beta$, we know
\[
\alpha-\theta\alpha\notin\Phi\qand \frac{1}{2}(\alpha-\theta\alpha)\notin\Phi.
\]
This is the only involution of type $A$ which is not admissible.
\end{Ex}

\begin{Ex}\label{Ex:A_n}
Let us consider
\[
\dynkin[labels*={\alpha_1, \alpha_2,\alpha_{n-1}, \alpha_{n}}, involutions={1<below>[\theta^*]4}]A{o*.*o}
\]
where
\[
\alpha_1=e_1-e_2,\quad \alpha_2=e_2-e_3,\quad\cdots,\quad\alpha_{n}=e_{n}-e_{n+1}.
\]
Then $\Delta_0=\{\alpha_2,\cdots,\alpha_{n-1}\}$ and $\Phi_0$ is of type $A_{n-2}$. The longest element $w_0$ of the $\theta$-invariant system $\Phi_0$ acts as
\[
w_0e_1=e_1,\quad w_0e_{n+1}=e_{n+1},\quad w_0e_2=e_{n},\quad w_0e_3=e_{n-1},\quad\cdots,
\]
and the diagram involution $\theta^*$ acts as
\[
\theta^*e_1=-e_{n+1}\qand \theta^*e_i=-w_0e_i\quad (i=2,\dots, n).
\]
So
\begin{align*}
-\theta\alpha_1&=\theta^*w_0(e_1-e_2)=\theta^*(e_1-e_n)=e_2-e_{n+1}\\
-\theta\alpha_n&=\theta^*w_0(e_n-e_{n+1})=\theta^*(e_2-e_{n+1})=e_1-e_{n}.
\end{align*}
Note that
\[
\alpha_1-\theta\alpha_1=\alpha_n-\theta\alpha_n=e_1-e_{n+1}\in\Phi.
\]
\end{Ex}

\begin{Ex}\label{Ex:A_3}
Consider the diagram
\[
\dynkin[labels={\alpha_1,\alpha,\alpha_3}] A{*o*}
\]
where
\[
\alpha_1=e_1-e_2,\quad \alpha=e_2-e_3,\quad \alpha_3=e_3-e_4.
\]
One can see that
\[
w_0e_1=e_2\qand w_0e_3=e_4,
\]
so
\[
-\theta\alpha=w_0(e_2-e_3)=e_1-e_4.
\]
Then
\[
\alpha-\theta\alpha=(e_1-e_3)+(e_2-e_4),
\]
so
\[
\alpha-\theta\alpha\notin\Phi\qand \frac{1}{2}(\alpha-\theta\alpha)\notin\Phi.
\]
\end{Ex}

Though this is an example of root system of type $A_3$, for our purposes, it should be viewed as a special case of the following type $D_n$ example.

\begin{Ex}\label{Ex:D_n}
Consider the involution with the Dynkin diagram
\[
\dynkin[labels={\alpha,,,e_{n-1}-e_n, e_{n-1}+e_n}]D{o*.***}
\]
where $\alpha=e_1-e_2$. Noting that $-\theta\alpha=\theta^*w_0\alpha$, where $w_0$ is the longest element of the Weyl group of type $D_{n-1}$, one can readily compute that
\[
-\theta\alpha=e_1+e_2.
\]
Then
\[
\alpha-\theta\alpha=2e_1\notin\Phi.
\]
Here, for later purposes, we rewrite it as
\[
\alpha-\theta\alpha=(e_1-e_n)+(e_1+e_n).
\]
\end{Ex}

\begin{Ex}\label{Ex:B_n}
Consider
\[
\dynkin[labels={\alpha,\alpha_2,\alpha_3,\alpha_{n-1},\alpha_n}] B{o**.**}
\]
where
\[
\alpha=e_1-e_2,\quad\alpha_2=e_2-e_3,\quad\dots,\quad\alpha_{n-1}=e_{n-1}-e_n,\quad\alpha_n=e_n.
\]
The Weyl group element $w_0$ acts as
\[
w_0 e_1=e_1,\qquad w_0e_i=-e_i\quad (2\leq i\leq n),
\]
and $-\theta=w_0$. Hence
\[
\alpha-\theta\alpha=e_1-e_2+w_0(e_1-e_2)=2e_1,
\]
so
\[
\frac{1}{2}(\alpha-\theta\alpha)\in\Phi.
\]
\end{Ex}

\subsection{Properties of non-invariant roots}
For each $\alpha\in\Delta\smallsetminus\Delta_0$, depending on the behavior of $\alpha-\theta\alpha$, we say
\[
\text{$\alpha$ is of}\; \begin{cases}\text{type 1}&\text{if $\alpha-\theta\alpha\in\Phi$};\\
\text{type 2}&\text{if $\frac{1}{2}(\alpha-\theta\alpha)\in\Phi$};\\
\text{type 3}&\text{otherwise}.\end{cases}
\]
It is immediate that these three cases are mutually exclusive because our root system $\Phi$ is reduced.

Most of the simple roots $\alpha\in\Delta\smallsetminus\Delta_0$ are actually of type 1 or type 2, and moreover if a root is of type 2 with $-\theta\alpha\neq \alpha$, then it essentially comes from the $B_n$ case discussed in Example \ref{Ex:B_n}, and if a root is of type 3, it essentially comes from the $A_2$ case (Example \ref{Ex:A_2}), the $A_1\times A_1$ case (Example \ref{Ex:A_n}), or the $D_n$ case (Example \ref{Ex:D_n}). To be precise, we have the following.

\begin{Prop}\label{P:types_of_simple_roots}
Let $\alpha\in\Delta\smallsetminus\Delta_0$ be not of type 1. Then the following hold, where the white dot corresponds to $\alpha$.
\begin{enumerate}[(a)]
\item $\alpha$ is of type 2 if and only if the diagram of $\alpha$ is one of the following two
\[
\dynkin A{o}\qquad\qquad
\dynkin B{o*.**}
\]
%Moreover, the first case happens if and only if $-\theta\alpha=\alpha$, and if the second case happens then $\alpha$ and $-\theta\alpha$ are orthogonal.
\item $\alpha$ is of type 3 if and only if the diagram of $\alpha$ is one of the following three
\[
\dynkin[involutions={1<below>[\theta^*]2}, edge/.style={white}]A{oo}\qquad\qquad
\dynkin A{o*}\qquad\qquad
\dynkin D{o*.***}
\]
\end{enumerate}
\end{Prop}
\begin{proof}
As mentioned already, the diagram of $\alpha$ is one of the 18 cases listed in Table \ref{T:table} in Appendix \ref{A:table}. One can prove the proposition by checking all the 18 cases, which is done in Appendix \ref{A:table}.
\end{proof}

%%%%%%%%%%%%%%%%%%%%%%%%%%%%%%%%%%%%%%%%%%%%%%%%%%%%%%%%%%%%%%%%%%%

\section{Construction of $G^-$ by folding}

%%%%%%%%%%%%%%%%%%%%%%%%%%%%%%%%%%%%%%%%%%%%%%%%%%%%%%%%%%%%%%%%%%%

We still keep the notation and assumption. In particular, $\theta$ is an involution on the root datum $\Phi(G, T)$. In this section, we construct the subgroup $G^-\subseteq G$ whose root datum is the $\theta$-split root datum $\overline{\Phi}(G, T^-)$ (with the short roots discarded if it is non-reduced).

\subsection{On roots of type 3}\label{SS:Type_3_root}
Assume $\alpha\in\Delta\smallsetminus\Delta_0$ is of type 3, so that its diagram is one of the three in Proposition \ref{P:types_of_simple_roots} (b).
\begin{enumerate}[$\bullet$]
\item If the diagram of $\alpha$ is
\[
\dynkin[labels*={\alpha, \theta^*\alpha}, involutions={1<above>2}, edge/.style={white}]A{oo}
\]
then
\[
-\theta\alpha=\theta^*\alpha,
\]
so $\alpha-\theta\alpha=\alpha+\theta^*\alpha$, which is not a root. We set
\[
\alpha_1=\alpha\qand\alpha_2=\theta^*\alpha,
\]
so
\[
-\theta\alpha_1=\alpha_2\qand \la\alpha_1,\alpha_2^\vee\ra=0.
\]

\item If the diagram of $\alpha$ is
\[
\dynkin[labels={\alpha}]D{o*.***}
\]
then
\[
\alpha=e_1-e_2\qand -\theta\alpha=e_1+e_2,
\]
so $\alpha-\theta\alpha=2e_1$, which is not a root. We set
\[
\alpha_1=e_1-e_n\qand \alpha_2=e_1+e_n,
\]
so
\[
-\theta\alpha_1=\alpha_2\qand \la\alpha_1,\alpha_2^\vee\ra=0.
\]
This choice of $\alpha_1$ and $\alpha_2$ is made so that we have the following.
\begin{Lem}\label{L:type3_orthogonal_to_rho_0}
Recall that $2\rho_0^\vee$ is the sum of positive $\theta$-invariant coroots. We have
\begin{equation}
\la\alpha_1,2\rho_0^\vee\ra=\la\alpha_2,2\rho_0^\vee\ra=0.
\end{equation}
\end{Lem}
\begin{proof}
We may assume that $2\rho_0^\vee$ is the sum of positive coroots of only the diagram of $\alpha$ because all the coroots that are not in the diagram of $\alpha$ are clearly orthogonal to $\alpha$. Then the positive coroots are of the form $\{e_i^\vee\pm e_j^\vee\st 2\leq i<j\leq n\}$. One can then see that
\[
2\rho_0^\vee=2(n-2)e_2^\vee+2(n-3)e_3^\vee+\cdots+2e_{n-1}^\vee,
\]
so $2\rho_0^\vee$ does not have a term of $e_1^\vee$ or $e_n^\vee$. The lemma follows.
\end{proof}

\item Finally, we assume that the diagram of $\alpha$ never takes the form
\begin{equation}\label{E:excluded_diagram_type3}
\dynkin A{o*}
\end{equation}
Namely, we assume that the Dynkin diagram of our $\theta$ never has this $A_2$ diagram as a subdiagram. But this is not a serious assumption for our purposes in that this diagram never appears in the dual involution of an admissible involution on a reductive group.
\end{enumerate}

\quad

We can summarize this subsection as follows.
\begin{Prop}\label{P:decomposition_type_3_root}
Assume that $\alpha\in\Delta\smallsetminus\Delta_0$ is of type 3 and not of the form $\dynkin[edge-length=.5cm, root-radius=.05cm]A{o*}$. Then there exist a unique pair of roots $\{\alpha_1,\alpha_2\}$ such that
\begin{enumerate}[(a)]
\item $\alpha-\theta\alpha=\alpha_1+\alpha_2$;
\item $\alpha_1$ and $\alpha_2$ are strongly orthogonal;
\item $\la \alpha_1, 2\rho_0^\vee\ra=\la \alpha_2, 2\rho_0^\vee\ra=0$.
\end{enumerate}
\end{Prop}
\begin{proof}
We have not proven the strong orthogonality and uniqueness. But they are almost immediate and left to the reader.
\end{proof}

\subsection{A root sub-datum}
In this subsection, we introduce a root sub-datum of $\Phi(G, T)$, which we will ``fold" to construct the subgroup $G^-\subseteq G$, assuming that the diagram of any $\alpha\in\Delta\smallsetminus\Delta_0$ is not of the form $\dynkin[edge-length=.5cm, root-radius=.05cm]A{o*}$.

First, we set
\[
\Sigma_\theta=\bigcup_{\text{$\alpha$ of type 1}}\{\alpha-\theta\alpha\}
\cup\bigcup_{\text{$\alpha$ of type 2}}\{\frac{1}{2}(\alpha-\theta\alpha)\}
\cup\bigcup_{\text{$\alpha$ of type 3}}\{\alpha_1, \alpha_2\},
\]
where the union is over only simple roots. Notice that all the elements in $\Sigma_\theta$ are positive roots, and the unions are all disjoint. Furthermore, the involution $-\theta$ acts on $\Sigma_\theta$ so that each one of the sets $\{\alpha-\theta\alpha\}$, $\{\frac{1}{2}(\alpha-\theta\alpha)\}$ and $\{\alpha_1, \alpha_2\}$ is a single orbit, where for the type 3 case, $-\theta$ interchanges $\alpha_1$ and $\alpha_2$.

Set
\[
\Phi(G, T)_\theta:=(X(T), \Sigma_\theta^{ac}, X(T)^\vee, \Sigma_\theta^{ac\vee}),
\]
which is an additively closed root sub-datum of $\Phi(G, T)$ by Lemma \ref{L:additive_closure_is_root_datum}.

\begin{Lem}
The set $\Sigma_\theta$ is a basis of $\Phi(G, T)_\theta$.
\end{Lem}
\begin{proof}
By \cite[Lemma 3.3]{Knop_Schalke}, it suffices to check that for any two $\alpha', \beta'\in\Sigma_\theta$ we have $\alpha'-\beta'\notin\Phi^+$. Certainly, we may assume $\Phi(G, T)$ is irreducible. Then the following are the possibilities of $\alpha'$ and $\beta'$:
\[
\alpha'=\alpha-\theta\alpha,\; \frac{1}{2}(\alpha-\theta\alpha),\;\text{or}\;\alpha+\mu,
\qand
\beta'=\beta-\theta\beta,\; \frac{1}{2}(\beta-\theta\beta),\;\text{or}\;\beta+\nu,
\]
where $\alpha$ and $\beta$ are simple roots of type 1, 2 and 3, respectively, and $\mu, \nu\in\Span_{\Z}(\Delta_0)$. Certainly, we may assume that $\alpha\neq\beta$, because otherwise $\alpha'=\beta'$ and hence $\alpha'-\beta'=0\notin\Phi^+$. Also from Lemma \ref{L:theta^*} we have
\[
-\theta\alpha=\theta^*\alpha+\gamma\qand -\theta\beta=\theta^*\beta+\delta
\]
for some $\gamma, \delta\in\Span_{\Z}(\Delta_0)$, where $\theta^*$ is a diagram automorphism.

We will show that $\alpha'-\beta'$ is not even a root, let alone a positive one. Also in the following argument, we often use the well-known fact that a root is always either a positive linear combination of simple roots or a negative linear combination.

Assume $\alpha'=\alpha-\theta\alpha$ and $\beta'=\beta-\theta\beta$. Then we have
\[
\alpha'-\beta'=\alpha+\theta^*\alpha+\gamma-\beta-\theta^*\beta-\delta.
\]
Since $\theta^*\alpha\in\Delta\smallsetminus\Delta_0$ and $\alpha\neq\beta$, in order for $\alpha'-\beta'$ to be a root, we must have $\theta^*\alpha=\beta$, which would give us $\alpha'-\beta'=\gamma-\delta$. So $\theta(\alpha'-\beta')=\alpha'-\beta'$. But at the same time $\theta(\alpha'-\beta')=-(\alpha'-\beta')$ because $\theta\alpha'=-\alpha'$ and $\theta\beta'=-\beta'$, so $\alpha'-\beta'=0$, which is not a root.

Assume  $\alpha'=\alpha-\theta\alpha$ and $\beta'=\frac{1}{2}(\beta-\theta\beta)$. There are two different cases for $\beta'$: either $\beta=-\theta\beta$ or not. Assume $\beta=-\theta\beta$, in which case $\beta'=\beta$. Then
\[
\alpha'-\beta'=\alpha+\theta^*\alpha+\gamma-\beta.
\]
Since $\alpha\neq\beta$, in order for this to be a root, we must have $\theta^*\alpha=\beta$. But then $\beta=-\theta\beta=\theta^*\beta+\delta=\alpha+\delta$, which must imply $\beta=\alpha$, which is a contradiction. Hence $\alpha'-\beta'$ is not a root.

Next, assume $\beta\neq-\theta\beta$. In this case, we already know that $\theta^*\beta=\beta$ (Proposition \ref{P:types_of_simple_roots}), and hence
\[
\alpha'-\beta'=\alpha+\theta^*\alpha+\gamma-\beta-\frac{1}{2}\delta.
\]
In order for this to be a root, we must have $\theta^*\alpha=\beta$. But since $\theta^*\beta=\beta$, we must have $\alpha=\beta$, which is a contradiction. Hence $\alpha'-\beta'$ is not a root.

Assume $\alpha'=\alpha-\theta\alpha$ and $\beta'=\beta+\nu$. Then
\[
\alpha'-\beta'=\alpha+\theta^*\alpha+\gamma-\beta-\nu.
\]
In order for this to be a root, we must have $\theta^*\alpha=\beta$. If the diagram of $\beta$ is $A_1\times A_1$, then $\theta^*\alpha=\beta$ would imply that $\alpha$ is not of type 1. If the diagram of $\beta$ is $D_n$ then $\theta^*\beta=\beta$, which gives $\alpha=\beta$, a contradiction. Hence $\alpha'-\beta'$ is not a root.

Next, assume $\alpha'=\frac{1}{2}(\alpha-\theta\alpha)$, in which there are two subcases: either $-\theta\alpha=\alpha$ or not. By symmetry with the cases already considered above, we may assume either $\beta'=\frac{1}{2}(\beta-\theta\beta)$ or $\beta'=\beta+\nu$. Assume $-\theta\alpha=\alpha$ and $-\theta\beta=\beta$. Then $\alpha'-\beta'=\alpha-\beta$, which is not a root. Assume $-\theta\alpha\neq \alpha$ and $-\theta\beta\neq \beta$ but $\beta'=\frac{1}{2}(\beta-\theta\beta)$. Then
\[
\alpha'-\beta'=\alpha-\beta+\frac{1}{2}\gamma-\frac{1}{2}\delta.
\]
In order for this to be a root, we must have $\alpha=\beta$, which is a contradiction.

Next, assume $\alpha'=\frac{1}{2}(\alpha-\theta\alpha)$ and $\beta'=\beta+\nu$. Again by considering the two subcases for $\alpha$, one can derive a contradiction for each case.

The remaining case is when $\alpha'=\alpha+\mu$ and $\beta'=\beta+\nu$. But again by the same reasoning, one can see that $\alpha'-\beta'$ is not a root.
\end{proof}

We thus have the following.

\begin{Prop}
There is a subgroup $G^{--}\subseteq G$ whose root datum is $\Phi(G, T)_\theta$ with a basis $\Sigma_\theta$.
\end{Prop}
\begin{proof}
Apply Proposition \ref{P:subgroup_generated_by_sub-datum} to $\Phi(G, T)_\theta$
\end{proof}

\subsection{Folding $\Phi(G, T)_\theta$}
Now, we will fold the above constructed root datum $\Phi(G, T)_\theta$. Let us define
\[
s:\Sigma_\theta\longrightarrow\Sigma_\theta,\quad \alpha'\mapsto -\theta\alpha',
\]
namely
\begin{align*}
^s(\alpha-\theta\alpha)&=\alpha-\theta\alpha;\\
^s(\frac{1}{2}(\alpha-\theta\alpha))&=\frac{1}{2}(\alpha-\theta\alpha);\\
^s(\alpha_1)&=\alpha_2,
\end{align*}
depending on the type of $\alpha$. This is certainly an involution. For each
\[
\alpha'=\begin{cases}\alpha-\theta\alpha, &\text{if $\alpha$ is of type 1};\\\frac{1}{2}(\alpha-\theta\alpha),&\text{if $\alpha$ is of type 2};\\
 \alpha_i,&\text{if $\alpha$ is of type 3},\end{cases}
\]
we have
\[
\alpha'+{^s\alpha'}=\begin{cases}2(\alpha-\theta\alpha)&\text{if $\alpha$ is of type 1};\\
\alpha-\theta\alpha&\text{if $\alpha$ is of type 2 or 3},\end{cases}
\]
and
\[
{\alpha'}^\vee+{^s\alpha'}^\vee=\begin{cases}2(\alpha-\theta\alpha)^\vee&\text{if $\alpha$ is of type 1};\\(\alpha-\theta\alpha)^\vee&\text{if $\alpha$ is of type 2 or 3}.\end{cases}
\]

\begin{Lem}
The above involution is a folding.
\end{Lem}
\begin{proof}
To check Condition (a) of folding, we may assume $\alpha$ is of type 3. But we know
\[
\la\alpha_1, {^s\alpha_1^\vee}\ra=\la \alpha_1, \alpha_2^\vee\ra=0
\]
by Proposition \ref{P:decomposition_type_3_root}.

Condition (b) of folding holds because in our case $s=-\theta$ and hence $\la{^s\alpha}, {^s\beta}^\vee\ra=\la \alpha, \beta^\vee\ra$ for all roots $\alpha, \beta$ by \eqref{E:theta_preserves_length}.
\end{proof}

\begin{Lem}
Let $r:X(T)\to X(T^-)$ be the restriction map. Then $r(\alpha')=r({^s\alpha'})$ for all $\alpha'\in\Sigma_\theta$. Also ${\alpha'}^\vee+{^s\alpha'}^\vee\in X(T^-)$ for all $\alpha'\in\Sigma_\theta$.
\end{Lem}
\begin{proof}
For the first assertion, if $\alpha$ is of type 1 or 2, then $\alpha'={^s\alpha'}$ and hence the assertion is immediate. If $\alpha$ is of type 3 and the diagram of $\alpha$ is $A_1\times A_1$, then again the assertion is clear. Assume $\alpha$ is of type 3 and the diagram is $D_n$. Then with the notation of Section \ref{SS:Type_3_root}, we may write
\[
\alpha'=\alpha_1=e_1-e_n\qand {^s\alpha'}=\alpha_2=e_1+e_n,
\]
where $\theta e_n=e_n$. Hence by restricting to the $\theta$-split torus $T^-$, we have $e_n|_{T^-}=0$, so
\[
\alpha'|_{T^-}=e_1|_{T^-}={^s\alpha'}|_{T^-}.
\]
The first assertion of the lemma is proven.

The second assertion holds because ${\alpha'}^\vee+{^s\alpha'}^\vee$ is either $2(\alpha-\theta\alpha)^\vee$ or $(\alpha-\theta\alpha)^\vee$, and in either case the image is in the $\theta$-split torus $T^-$.
\end{proof}

Therefore, we can fold the root datum $\Phi(G, T)_\theta$ using the involution $s$, giving rise to a subgroup
\[
G^{-}\subseteq G^{--}\subseteq G
\]
whose based root datum is
\[
\overline{\Phi}(G, T^-)^{red}:=(X(T^-), \overline{\Sigma}_\theta, X(T^-)^\vee, \overline{\Sigma}_\theta^\vee),
\]
where each simple root in $\overline{\Sigma}_\theta$ is of the form
\[
\oalpha=\begin{cases}(\alpha-\theta\alpha)|_{T^-}&\text{if $\alpha$ is of type 1};\\
\alpha|_{T^{-}}=\frac{1}{2}(\alpha-\theta\alpha)|_{T^{-}}&\text{if $\alpha$ is of type 2 or 3},\end{cases}
\]
where all the roots are viewed as restricted to $T^-$ and hence the notation $\oalpha$. Each coroot in $\overline{\Sigma}_\theta^\vee$ is of the form
\[
\oalpha^\vee=\begin{cases}(\alpha-\theta\alpha)^\vee&\text{if $\alpha$ is of type 1};
\\\alpha^\vee-\theta\alpha^\vee&\text{if $\alpha$ is of type 2 or 3}.\end{cases}
\]

\begin{Prop}
The root datum $\overline{\Phi}(G, T^-)^{red}$ is the reduced root datum obtained by discarding the short roots from all the irreducible components of type $BC_n$ in $\overline{\Phi}(G, T^-)$. (Recall that $\overline{\Phi}(G, T^-)$ is the $\theta$-split root datum as defined in \eqref{E:theta-split-root-datum}.)
\end{Prop}
\begin{proof}
This follows from the above description of the roots $\oalpha$ of $\overline{\Phi}(G, T^-)^{red}$ and Lemma \ref{L:reduced_split_root_datum}.
\end{proof}

The group $G^-$ can be more explicitly described as follows. The folding $s:\Sigma_\theta\to\Sigma_\theta$ on the simple roots naturally extends to an involution $s:\overline{\Sigma_\theta^{ac}}\to \overline{\Sigma_\theta^{ac}}$ on all the roots. For each $\alpha\in\overline{\Sigma_\theta^{ac}}$, we let $\mathcal{O}_\alpha\subseteq \overline{\Sigma_\theta^{ac}}$ be the orbit of $\alpha$ under $s$. We then let
\[
u_{\oalpha}:\Ga\longrightarrow G,\quad x\mapsto \prod_{\beta\in\mathcal{O}_\alpha}u_\beta(x).
\]
Let $U_\oalpha$ be the image of $u_{\oalpha}$. Then the group $G^-$ is generated by the torus $T^-$ and the ``root subgroups" $U_\oalpha$ for all $\oalpha\in\overline{\Sigma_\theta^{ac}}$. (See \cite[Proposition 10.3.5]{Springer}.)

In particular, if $\alpha\in\Delta\smallsetminus\Delta_0$ then we have
\[
u_{\oalpha}(x)=\begin{cases}u_{\alpha-\theta\alpha}(x)&\text{if $\alpha$ is of type 1};\\
u_{\frac{1}{2}(\alpha-\theta\alpha)}(x)&\text{if $\alpha$ is of type 2};\\
u_{\alpha_1}(x)u_{\alpha_2}(x)&\text{if $\alpha$ is of type 3}.\end{cases}
\]
Note for the third case, one can see that the two elements $u_{\alpha_1}(x)$ and $u_{\alpha_2}(x)$ commute, so $u_{\oalpha}$ is indeed a homomorphism.

Let us mention that the group $G^{--}$ is used to apply the method of folding to construct $G^-$ and plays only an auxiliary role just like $G^{++}$.

%%%%%%%%%%%%%%%%%%%%%%%%%%%%%%%%%%%%%%%%%%%%%%%%%%%%%%%%%%%%%%%%%%%

\section{Commutativity with principal $\SL_2$}

%%%%%%%%%%%%%%%%%%%%%%%%%%%%%%%%%%%%%%%%%%%%%%%%%%%%%%%%%%%%%%%%%%%

We have thus far constructed the two subgroups $G^+\subseteq G$ and $G^-\subseteq G$ along with the principal $\SL_2$ homomorphism $\varphi^+:\SL_2\to G^+$ with the assumption that if a simple root $\alpha\in\Delta\smallsetminus\Delta_0$ is of type 3 then we exclude the diagram $\dynkin[edge-length=.5cm, root-radius=.05cm]A{o*}$. In this section, we prove that the image $\varphi^+(\SL_2)$ commutes with $G^-$ pointwise, though later we will have to exclude three other diagrams.

We set
\[
U_{2\rho_0}=\{\varphi^+(\begin{pmatrix}1&x\\&1\end{pmatrix})\st x\in k\}
\qand
U_{-2\rho_0}=\{\varphi^+(\begin{pmatrix}1&\\x&1\end{pmatrix})\st x\in k\}.
\]
Also recall that $T^0$ is the image of $2\rho_0^\vee: \Gm\to G^+$. Then $\varphi^+(\SL_2)$ is generated by $T^0$ and $U_{\pm 2\rho_0}$, and we will show that $G^-$ commutes with these groups pointwise.

\subsection{Commutativity with $T^0$}
Let us first prove the following without any further assumption.
\begin{Lem}\label{L:commutativity_with_T^0}
The two groups $G^{-}$ and $T^0$ commute pointwise.
\end{Lem}
\begin{proof}
We can apply the folding argument that is used to construct $G^-$ to the larger torus $T^0T^-$ to construct a larger group. Indeed, for each type 3 simple root $\alpha$, we know from Proposition \ref{P:decomposition_type_3_root} that $\la\alpha_i,2\rho_0^\vee\ra=0$ for $i=1, 2$, which implies $\alpha_1|_{T^0}=\alpha_2|_{T^0}=0$. Hence one can see that for each $\alpha'\in\Sigma_\theta$, we have the identity $\alpha'|_{T^0T^-}={^s\alpha'}|_{T^0T^-}$ on the restriction to the larger torus $T^0T^-$. This allows us to fold the root datum $\Phi(G, T)_\theta$ with respect to the torus $T^0T^-$ instead of $T^-$, which gives a group whose based root datum is
\[
(X(T^0T^-), \overline{\Sigma}_\theta, X(T^0T^-)^\vee, \overline{\Sigma}_\theta^\vee).
\]
Let us denote this group by $G^{0-}$. The only difference from $G^-$ is that the maximum torus of $G^{0-}$ is the larger torus $T^0T^-$, and we have $G^-\subseteq G^{0-}$.

Now, the action of the involution $\theta$ descends to an action of this root datum. Since each root $\oalpha\in\overline{\Sigma}_\theta$ is of the form $(\alpha-\theta\alpha)|_{T^{0-}}$ or $\frac{1}{2}(\alpha-\theta\alpha)|_{T^{0-}}$ for a simple root $\alpha\in\Delta\smallsetminus\Delta_0$, we know that for every (not necessarily simple) root $\oalpha$, we have $\theta\oalpha=-\oalpha$. Hence one can see that $\oalpha|_{T^0}=0$, meaning that $T^0$ is in the kernel of all the roots of $G^{0-}$. Thus $T^0$ commutes with $G^-$ pointwise.
\end{proof}

\subsection{Other diagrams to be excluded}
To proceed further, we exclude three other diagrams in addition to the diagram \eqref{E:excluded_diagram_type3}.

If $\alpha\in\Delta\smallsetminus\Delta_0$ is of type 1, we exclude the following two
\begin{equation}\label{E:excluded_diagram_type1}
\dynkin C{*o*.**}\qquad\qquad \dynkin F{***o}
\end{equation}
which are, respectively, No.\ 9 and No.\ 16 in Table \ref{T:table} of Appendix \ref{A:table}. If $\alpha\in\Delta\smallsetminus\Delta_0$ is of type 2, we exclude
\begin{equation}\label{E:excluded_diagram_type2}
\dynkin B{o*.**}
\end{equation}
which is No.\ 6 in the table. By Proposition \ref{P:types_of_simple_roots}, to exclude this diagram is equivalent to assuming that
\[
\text{$\alpha$ is of type 2}\quad \Longleftrightarrow\quad \alpha=-\theta\alpha.
\]

Let us explain what goes wrong with these three diagrams. For the two diagrams of type 1, one can see that the simple root $\alpha$ is not strongly orthogonal to some of the $\theta$-stable simple roots in $\Delta_0$.

For the diagram of type 2, as an example, consider the following case
\[
\dynkin[labels={\alpha, \beta}]B{o*}
\]
where $\alpha$ is a long root and $\beta$ is a short root. One can then see that $-\theta$ is the simple reflection associated with the short root $\alpha+\beta$, so that $-\theta\alpha=2\alpha+\beta$ . Then
\[
\frac{1}{2}(\alpha-\theta\alpha)=\alpha+\beta,
\]
which is a short root.  One can then see (up to isogeny) that $G^+=\SL_2$ is generated by the root subgroups $U_{\pm\beta}$ and the principal $\SL_2$ homomorphism is the identity. One can also see that $G^-=\SL_2$ is generated by the root subgroups $U_{\pm(\alpha+\beta)}$. Since $\beta+(\alpha+\beta)=\alpha+2\beta$, which is a root, by the commutator relation \eqref{E:commutator_relation} these two root subgroups do not commute. Indeed, if one assumes that $G=\Sp_4$, one can visualize $G^+$ and $G^-$ by matrices as
\[
G^+=\{\begin{pmatrix}g&\\ &{^tg^{-1}}\end{pmatrix}\st g\in\SL_2\}, \qand
G^-=\{\begin{pmatrix}aI&bJ\\cJ&dI\end{pmatrix}\st \begin{pmatrix}a&b\\c&d\end{pmatrix}\in\SL_2\},
\]
where $I=\left(\begin{smallmatrix}1&0\\0&1\end{smallmatrix}\right)$ and $J=\left(\begin{smallmatrix}0&1\\1&0\end{smallmatrix}\right)$. One can then directly verify that these two subgroups do not commute, though $G^-$ and $T^0$ do commute.

\subsection{Commutativity with root subgroups}
To show that $\varphi^+(\SL_2)$ and $G^-$ commute pointwise (by excluding the four diagrams), it suffices to show that for all the simple roots $\oalpha\in\overline{\Sigma}_\theta$ the root subgroups $U_{\pm\oalpha}$ commute with the unipotent groups $U_{\pm2\rho_0}$. But, for most cases, we will show that the subgroups $U_{\pm\oalpha}$ already commute with the unipotent groups $U_{\pm\gamma}$ for $\gamma\in\Delta_0$.

\begin{Lem}
Let $\alpha\in\Delta\smallsetminus\Delta_0$ be of type 1 and let $\alpha'=\alpha-\theta\alpha$. We exclude the two diagrams \eqref{E:excluded_diagram_type1}. Then for each $\gamma\in\Delta_0$, the additive closure of the set $\{\alpha',\gamma\}$ is $\{\pm\alpha', \pm\gamma\}$; namely $\alpha'$ and $\gamma$ are strongly orthogonal. Therefore, $U_{\pm\oalpha}$ commute with $U_{\pm\gamma}$ for all $\gamma\in\Delta_0$, and hence commute with $U_{\pm2\rho_0}$.
\end{Lem}
\begin{proof}
Since $\alpha'$ and $\gamma$ are orthogonal by Lemma \ref{L:invariant_non-invariant_roots_orthogonal}, if the additive closure of $\{\alpha',\gamma\}$ is not $\{\pm\alpha', \pm\gamma\}$, the only possible scenario is that the additive closure is a root datum of type $B_2$ with both $\alpha'$ and $\gamma$ short roots. Hence the lemma holds if the diagram of $\alpha$ is simply laced or of type $G_2$. Then one can prove the lemma by directly checking all the diagrams of the $B_n$, $C_n$ and $F_4$ types provided in Appendix \ref{A:table}. Thus the commutativity follows from the commutator relation \eqref{E:commutator_relation}.
\end{proof}

\begin{Lem}
Let $\alpha\in\Delta\smallsetminus\Delta_0$ be of type 2. We exclude the diagram \eqref{E:excluded_diagram_type2}. Then for each $\gamma\in\Delta_0$, the additive closure of the set $\{\alpha,\gamma\}$ is $\{\pm\alpha, \pm\gamma\}$; namely $\alpha$ and $\gamma$ are strongly orthogonal. Therefore, $U_{\pm\oalpha}$ commute with $U_{\pm\gamma}$ for all $\gamma\in\Delta_0$, and hence commute with $U_{\pm2\rho_0}$.
\end{Lem}
\begin{proof}
If a $\Z$-linear combination $i\alpha+j\gamma$ with $i>0$ is a root, then $j\geq 0$ and further $\theta(i\alpha+j\gamma)=-i\alpha+j\gamma$ must be a negative root, which implies $j=0$. Similarly, for the other cases. Hence $\alpha$ and $\gamma$ are strongly orthogonal. Thus the commutativity follows from the commutator relation \eqref{E:commutator_relation}.
\end{proof}

%\begin{Lem}
%Let $\alpha\in\Delta\smallsetminus\Delta_0$ be of type 2. We exclude the diagram \eqref{E:excluded_diagram_type2}, so that $\alpha'=\frac{1}{2}(\alpha-\theta\alpha)=\alpha$. Then the root subgroups $U_{\pm\alpha'}=U_{\pm\alpha}$ commute with the unipotent groups $U_{\pm2\rho_0}$ pointwise.
%\end{Lem}
%\begin{proof}
%The proof is exactly the same as the type 1 case.
%\end{proof}

\begin{Lem}
Assume that $\alpha\in\Delta\smallsetminus\Delta_0$ is of type 3, whose diagram is
\[
\dynkin[involutions={1<below>[\theta^*]2}, edge/.style={white}]A{oo}
\]
Then both $U_{\pm\alpha}$ and $U_{\mp\theta\alpha}$ commute with $U_{\pm\rho_0}$ pointwise.
\end{Lem}
\begin{proof}
Apparently all the simple roots in $\Delta_0$ are strongly orthogonal to both $\alpha$ and $\theta^*\alpha$. Thus the commutativity follows from the commutator relation \eqref{E:commutator_relation}.
\end{proof}

Now, let us take care of the type 3 case whose diagram is $D_n$. This is the only case where $\alpha$ is not strongly orthogonal to the $\theta$-invariant roots.

\begin{Lem}
Assume that $\alpha\in\Delta\smallsetminus\Delta_0$ is of type 3, whose diagram is
\[
\dynkin[labels={,,,,}]D{o*.***}
\]
so that we can uniquely write $\alpha-\theta\alpha=\alpha_1+\alpha_2$, where $\alpha_1$ and $\alpha_2$ are positive roots orthogonal to $2\rho_0$. Recall then that the root subgroup $U_{\oalpha}$ associated with $\oalpha=\frac{1}{2}(\alpha-\theta\alpha)|_{T^-}$ is generated by the elements of the form $u_{\oalpha}(x)=u_{\alpha_1}(x)u_{\alpha_2}(x)$ for $x\in k$. Then $U_{\pm\oalpha}$ commutes with $U_{\pm2\rho_0}$ pointwise.
\end{Lem}
\begin{proof}
Recall that we use the standard notation for the roots of type $D_n$, so that
\[
\alpha=e_1-e_2\qand \alpha-\theta\alpha=2e_1=(e_1-e_n)+(e_1+e_n)=\alpha_1+\alpha_2,
\]
where we have put
\[
\alpha_1=e_1-e_n\qand\alpha_2=e_1+e_n.
\]
Note that
\[
-\theta e_1=e_1\qand -\theta e_i=-e_i\quad (2\leq i\leq n).
\]

Apparently, the simple roots that do not appear in the diagram of $\alpha$ are strongly orthogonal to $\alpha_1$ and $\alpha_2$, and their root subgroups clearly commute with $U_{\pm\oalpha}$.

Among the simple roots in the diagram of $\alpha$, those that are not strongly orthogonal to $\alpha_1$ or $\alpha_2$ are
\[
e_{n-1}-e_n\qand e_{n+1}+e_n.
\]
It suffices to show that the elements $u_{\alpha_1}(x)u_{\alpha_2}(x)$ and $u_{\pm(e_{n-1}-e_n)}(y)u_{\pm(e_{n-1}+e_n)}(y)$ commute for all $x, y\in k$.

Let $\beta_1=e_{n-1}-e_n$ and $\beta_2=e_{n-1}+e_n$. Consider the additive closure $\{\alpha_1, \alpha_2, \beta_1, \beta_2\}^{ac}$. One can see that this additive closure is a root datum of type $A_3$, where we can choose $\{\beta_1, e_1-e_{n-1}, \beta_2\}$ as a $\theta$-basis, giving rise to the diagram
\[
\dynkin[labels={\beta_1,e_1-e_{n-1}, \beta_2}] A{*o*}
\]
Here it should be noted that
\[
(e_1-e_{n-1})-\theta(e_1-e_{n-1})=2e_1=(e_1-e_n)+(e_1+e_n).
\]
Up to isogeny, the root subgroup $U_{\oalpha}$ is realized as
\[
\{\begin{pmatrix}I_2&xI_2\\ &I_2\end{pmatrix}\st x\in k\}
\]
where $I_2$ is the $2\times 2$ identity matrix, and the group generated by $u_{\pm(e_{n-1}-e_n)}(y)u_{\pm(e_{n-1}+e_n)}(y)$ is
\[
\{\begin{pmatrix}g&\\&g\end{pmatrix}\st g\in\SL_2\}.
\]
One can easily verify that these two subgroups commute pointwise.

Similarly one can show the commutativity for $U_{-\oalpha}$. The lemma is proven.
\end{proof}

Hence, we have proven the following.
\begin{Prop}\label{P:commutativity_SL2_general_form}
Assume that no $\alpha\in\Delta\smallsetminus\Delta_0$ has the diagram among the four diagrams in \eqref{E:excluded_diagram_type1}, \eqref{E:excluded_diagram_type2} and \eqref{E:excluded_diagram_type3}. Then the group $G^-$ pointwise commutes with the image $\varphi^+(\SL_2)$ of the principal $\SL_2$ homomorphism $\varphi^+:\SL_2\to G^+$.

Hence we have the group homomorphism
\[
G^-\times\SL_2\longrightarrow G.
\]
\end{Prop}
%\begin{proof}
%We already know that the torus $T^0\subseteq \varphi^+(\SL_2)$ commutes with $G^-$ by Lemma \ref{L:commutativity_with_T^0}. Hence it suffices to prove that for each root $\oalpha\in\overline{\Sigma_\theta^{ac}}$ of $G^-$, the root subgroup $U_{\oalpha}$ commutes with the unipotent group $U_{\pm2\rho_0}$. But if $\pm\oalpha$ is a simple root, we have already shown the commutativity. Assume $\pm\oalpha$ is not simple. Then there is a simple root $\obeta$ and a Weyl group element $w$ of the Weyl group of $\overline{\Phi}(G, T^-)$ such that $w\obeta=\oalpha$.
%Then
%\[
%U_{\oalpha}=wU_{\obeta}w^{-1}.
%\]
%But $w$ is a product of simple reflections, each of which is of the form
%\[
%u_{\oepsilon}(1)u_{-\oepsilon}(-1)u_{\oepsilon}(1)
%\]
%for some simple root $\oepsilon$. Hence $w$ commutes with all the elements in $U_{\pm2\rho_0}$, from which one can see that $U_{\oalpha}$ commutes with $U_{\pm2\rho_0}$ pointwise.
%\end{proof}

%%%%%%%%%%%%%%%%%%%%%%%%%%%%%%%%%%%%%%%%%%%%%%%%%%%%%%%%%%%%%%%%%%%%%%%%%%%%%%%%%%%%%%%%%

\section{Dual groups}

%%%%%%%%%%%%%%%%%%%%%%%%%%%%%%%%%%%%%%%%%%%%%%%%%%%%%%%%%%%%%%%%%%%%%%%%%%%%%%%%%%%%%%%%%

Now, we assume $G$ is a connected reductive group split over our nonarchimedean local field $F$ and $\theta$ an $F$-involution on $G$. We let $H$ be the $\theta$-fixed points of $G$, so that $X:=H\backslash G$ is a symmetric variety. Let $T\subseteq G$ be a maximal split torus and set
\[
T^+=\{t\in T\st\theta(t)=t\}^\circ\qand T^-=\{t\in T\st\theta(t)=t^{-1}\}^\circ.
\]
We assume that $T$ is chosen so that $T^-$ is a maximal $\theta$-split torus.

We let
\[
\Phi(G, T)=(X(T),\Phi, X(T)^\vee, \Phi^\vee)
\]
be the root datum of $G$. The involution $\theta$ naturally induces an involution on $\Phi(G, T)$, which we also denote by $\theta$. By our choice of the torus $T$, this involution is admissible. We let $\theta^\vee$ be the involution on the dual datum
\begin{equation}\label{E:dual_datum}
\Phi(\widehat{G}, \widehat{T})=(X(T)^\vee,\Phi^\vee, X(T), \Phi),
\end{equation}
which we have called the dual involution. The dual involution $\theta^\vee$ does not have to be admissible. For example, the involutions
\[
\dynkin B{o**.**}\qand \dynkin C{o**.**}
\]
are dual to each other, yet the first one is admissible but the second one is not. (See No.\ 6 and 8 in Table \ref{T:table}.)

\subsection{Definition of dual group}
Let $\widehat{G}$ be the Langlands dual group of $G$, so that it is the complex connected reductive group whose root datum is dual to that of $G$, namely $\Phi(\widehat{G}, \widehat{T})$ as in \eqref{E:dual_datum}. By applying the constructions in the previous sections, we have the subgroups
\[
\widehat{G}^+\qand \widehat{G}^-
\]
of $\widehat{G}$. We set
\[
\widehat{G}_X=\widehat{G}^-\qand \widehat{\varphi}^+: \SL_2(\C)\longrightarrow \widehat{G}^+,
\]
where $\widehat{\varphi}^+$ is the principal $\SL_2(\C)$ homomorphism for $\widehat{G}^+$.

\begin{Lem}
Let $\alpha^\vee\in\Delta^\vee\smallsetminus\Delta_0^\vee$ be a simple coroot of $G$, which is a simple root of $\widehat{G}$. The diagram of $\alpha^\vee$ (with respect to the dual involution $\theta^\vee$) is not among the four diagrams in \eqref{E:excluded_diagram_type1}, \eqref{E:excluded_diagram_type2} and \eqref{E:excluded_diagram_type3}.
\end{Lem}
\begin{proof}
By Lemma \ref{L:additive_closure_dual}, if $\Phi_{\alpha^\vee}$ is the additive closure of $\{\alpha^\vee\}\cup\Delta_0^\vee$, then its dual $(\Phi_{\alpha^\vee})^\vee$ is the additive closure $\Phi_\alpha$ of $\{\alpha\}\cup\Delta_0$. Hence the diagram of $\alpha^\vee$ is dual to that of $\alpha$. Since $\theta$ is admissible, the diagram of $\alpha$ is also admissible. (See \cite[Proposition 4.9, p.38]{Helminck_commuting_pair}.) But none of the these four diagrams has an admissible dual.
\end{proof}

By this lemma, we can apply Proposition \ref{P:commutativity_SL2_general_form} and obtain a homomorphism
\[
\widehat{\varphi}_X:\widehat{G}_X\times\SL_2(\C)\longrightarrow \widehat{G}.
\]

\section{Trivial representation}

%%%%%%%%%%%%%%%%%%%%%%%%%%%%%%%%%%%%%%%%%%%%%%%%%%%%%%%%%%%%%%%%%%%%%%%%%%%%%%%%%%

The trivial representation $\one$ of $G$ is $H$-distinguished for any subgroup $H\subseteq G$, so in particular it is $H$-distinguished for any symmetric variety $X=H\backslash G$. Hence by Conjecture (I) the local Langlands parameter $\varphi_\one$ of $\one$ should factor through $\widehat{\varphi}_X$ for any symmetric variety $X$. In this section, we prove this assertion.

Let us first recall that the local Langlands parameter $\varphi_\one$ of the trivial representation $\one$ is given by
\[
\varphi_\one:WD_F\longrightarrow \C^\times\xrightarrow{\;2\rho\;} \widehat{G},
\]
where the first map is given by the square root of the norm map, namely $w\mapsto |w|^{\frac{1}{2}}$ for $w\in WD_F$ (so it is trivial on the $\SL_2(\C)$-factor of $WD_F$), and the second map is the sum of positive coroots of $\widehat{G}$, which we identify with the sum of positive roots of $G$.

The sum of positive roots
\[
2\rho=\sum_{\alpha\in\Phi^+}\alpha
\]
is decomposed as
\[
2\rho=2\rho_-+2\rho_0,
\]
where
\[
2\rho_-=\sum_{\alpha\in\Phi^+\smallsetminus\Phi_0^{+}}\alpha\qand 2\rho_0=\sum_{\alpha\in\Phi_0^{+}}\alpha.
\]
Here, we assume that the order of roots is a $\theta$-order \eqref{E:positivity}.

\begin{Lem}
The involution $\alpha\mapsto-\theta\alpha$ stabilizes both $\Phi\smallsetminus\Phi_0$ and $\Phi^+\smallsetminus\Phi_0^{+}$ setwise.
\end{Lem}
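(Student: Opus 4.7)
The plan is a direct verification from the definitions, with the only nontrivial ingredient being the $\theta$-order condition \eqref{E:positivity} that was imposed on the choice of positive system.

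First, I would observe that the map $\alpha\mapsto -\theta\alpha$ is a well-defined involution on all of $\Phi$: since $\theta$ acts on the root datum it preserves $\Phi$, and $\Phi$ is closed under negation, so $-\theta\alpha\in\Phi$. The involutive property is immediate from $-\theta(-\theta\alpha)=\theta^2(\alpha)=\alpha$.

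Next, for the stability of $\Phi_\theta=\Phi\smallsetminus\Phi^\theta$, I would argue by contradiction. Suppose $\alpha\in\Phi_\theta$ but $-\theta\alpha\in\Phi^\theta$. Then $\theta(-\theta\alpha)=-\theta\alpha$. The left-hand side equals $-\alpha$, so $-\alpha=-\theta\alpha$, i.e.\ $\theta\alpha=\alpha$, contradicting $\alpha\notin\Phi^\theta$. Hence $-\theta\alpha\in\Phi_\theta$.

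Finally, for $\Phi_\theta^+$, take $\alpha\in\Phi_\theta^+$, so that $\alpha>0$ and $\theta\alpha\neq\alpha$. By the defining property \eqref{E:positivity} of a $\theta$-order we then have $\theta\alpha<0$, and therefore $-\theta\alpha>0$. Combined with the previous paragraph, this gives $-\theta\alpha\in\Phi_\theta^+$.

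I do not anticipate any serious obstacle here; the lemma is essentially a bookkeeping step that records why the $\theta$-order was introduced in the first place, and its usefulness in the next stage of the argument is that it lets one partition $\Phi_\theta^+$ into orbits of size at most two under $\alpha\mapsto -\theta\alpha$ when analyzing the sum $2\rho_-=\sum_{\alpha\in\Phi_\theta^+}\alpha$.
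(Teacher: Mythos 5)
Your proof is correct, and it takes a different (and more economical) route than the paper. The paper expands $\alpha$ in the simple roots, invokes Lemma \ref{L:theta^*} to write $-\theta\alpha_i=\theta^*\alpha_i+\gamma_i$ with $\theta^*$ permuting $\Delta\smallsetminus\Delta^\theta$, and reads off stability of $\Phi_\theta$ and $\Phi_\theta^+$ from the signs of the resulting coefficients. You instead get stability of $\Phi_\theta$ purely from the involutive identity $\theta(-\theta\alpha)=-\alpha$ (so $-\theta\alpha\in\Phi^\theta$ would force $\theta\alpha=\alpha$), and positivity directly from the defining property \eqref{E:positivity} of the $\theta$-order, which gives $\theta\alpha<0$ and hence $-\theta\alpha>0$. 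Your argument is shorter and avoids any appeal to the diagram automorphism $\theta^*$; the paper's coefficient computation buys slightly more information (how $-\theta$ rearranges the simple-root coefficients), which is in the spirit of the subsequent lemma on $2\rho_-$, but none of that extra information is needed for the statement at hand. Your closing remark about orbits of size at most two is exactly how the lemma gets used next.
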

\begin{proof}
Let $\alpha\in \Phi\smallsetminus\Phi_0$. Then it is written as
\[
\alpha=\sum_{\alpha_i\in\Delta\smallsetminus\Delta_0}n_{\alpha_i}\alpha_i+\sum_{\beta_i\in\Delta_0}m_i\beta_i,
\]
where at least one of $n_{\alpha_i}$ is nonzero. Then
\[
-\theta\alpha=-\sum_{\alpha_i\in\Delta\smallsetminus\Delta_0}n_{\alpha_i}\theta\alpha_i-\sum_{\beta_i\in\Delta_0}m_i\beta_i.
\]
We know from Lemma \ref{L:theta^*} that $-\theta\alpha_i=\theta^*\alpha_i+\gamma_i$ for some $\gamma_i\in\Span_{\Z}(\Delta_0)$. Noting that $\theta^*$ permutes the roots in $\Delta\smallsetminus\Delta_0$, we can write
\[
-\theta\alpha=\sum_{\alpha_i\in\Delta\smallsetminus\Delta_0}n_{\alpha_i}\theta^*\alpha_i+\sum_{\beta_i\in\Delta_0}m'_i\beta_i
=\sum_{\alpha_i\in\Delta\smallsetminus\Delta_0}n_{\theta^*\alpha_i}\alpha_i+\sum_{\beta_i\in\Delta_0}m'_i\beta_i
\]
for some $m'_i$'s. Now, since $n_{\alpha_i}\neq 0$ for some $i$, we have $n_{\theta^*\alpha_j}\neq 0$ for some $j$, which implies $-\theta\alpha\in\Phi\smallsetminus\Phi_0$. Further, if $\alpha\in\Phi^+\smallsetminus\Phi_0^{+}$ then $n_{\alpha_i}\geq 0$ for all $i$, which implies $n_{\theta^*\alpha_i}\geq 0$ for all $i$, so $-\theta\alpha\in\Phi^+\smallsetminus\Phi_0^{+}$.
\end{proof}

Now, we are ready to prove our theorem.
\begin{Thm}\label{T:trivial_parameter}
Let $\varphi_\one:WD_F\to \widehat{G}$ be the local Langlands parameter of the trivial representation. Then for any symmetric variety $X=H\backslash G$, the parameter $\varphi_\one$ factors through $\widehat{\varphi}_X:\widehat{G}_X\times\SL_2(\C)\to \widehat{G}$.

More explicitly, we have
\[
2\rho_-:\C^\times\longrightarrow \widehat{G}^-\qand 2\rho_0:\C^\times\longrightarrow \SL_2(\C)\xrightarrow{\widehat{\varphi}^+}\widehat{T}^0,
\]
which give the factorization
\[
\varphi_\one:WD_F\longrightarrow\C^\times\xrightarrow{\,2\rho_-+2\rho_0\,}\widehat{G}^-\times\SL_2(\C)\xrightarrow{\widehat{\varphi}_X}\widehat{G}.
\]
\end{Thm}
\begin{proof}
By the above lemma, we can write
\[
2\rho_-=\sum_{\alpha\in\Phi^+\smallsetminus\Phi_0^{+}}\alpha=\sum_{\alpha\in\Phi^+\smallsetminus\Phi_0^{+}}(-\theta\alpha),
\]
which gives
\[
2\rho_-=\frac{1}{2}\sum_{\alpha\in\Phi^+\smallsetminus\Phi_0^{+}}(\alpha-\theta\alpha).
\]
(Though we do not need it, it should be noted that the right hand side is actually a $\Z$-linear combination of $(\alpha-\theta\alpha)$'s despite the $\frac{1}{2}$; for, if $-\theta\alpha=\alpha$ then $\alpha-\theta\alpha=2\alpha$, and if $-\theta\alpha\neq\alpha$ then there are two occurrences of $\alpha-\theta\alpha$ in the sum.)
Hence
\[
\theta(2\rho_-)=-2\rho_-,
\]
which implies that the image of $2\rho_-$ is in $\widehat{T}^-$. Hence we have the map
\[
2\rho_-:\C^\times\longrightarrow \widehat{T}^-\subseteq \widehat{G}^-.
\]

As for $2\rho_0$, we already know
\[
2\rho_0:\C^\times\longrightarrow \widehat{T}^0\subseteq \widehat{G}^+,
\]
and by definition of the principal $\SL_2$, we know that $2\rho_0$ factors through $\widehat{\varphi}^+:\SL_2(\C)\to\widehat{G}^+$, so we have the map
\[
2\rho_0:\C^\times\longrightarrow \SL_2(\C)\xrightarrow{\widehat{\varphi}^+}\widehat{T}^0.
\]

The theorem follows.
\end{proof}

\section{Examples}

%%%%%%%%%%%%%%%%%%%%%%%%%%%%%%%%%%%%%%%%%%%%%%%%%%%%%%%%%%%%%%%%%%%%%%%%%%%%%%%%%

In this section, we verify our conjectures by numerous examples. We occasionally assume that our $p$-adic field $F$ has odd residual characteristic. Whenever we do this (except in Section \ref{SS:orthogonal_period}), this is because the Casselman type criteria of \cite{KT-Cuspidal, KT-Square, Takeda} are used. Also in this section, by $S_k$ we always mean the unique $k$-dimensional irreducible representation of $\SL_2(\C)$.

\subsection{Group Case}
Let us consider the case known as the group case; namely $G=H\times H$ where the involution $\theta$ switches the two factors of $H$. Then the $\theta$-fixed points are the diagonal $\Delta H$, which is of course isomorphic to $H$. Then $\pi\in\Irr(H\times H)$ is $\Delta H$ distinguished if and only if $\pi=\tau\otimes\check{\tau}$, where $\tau\in\Irr(H)$ and $\check{\tau}$ is its contragredient.

On the dual side, $\widehat{G}=\widehat{H}\times \widehat{H}$, and by the choice of positive roots \eqref{E:positivity} the root data of the two copies of $\widehat{H}$ have the opposite ordering.  Also the $\theta$-split torus $\widehat{T}^-$ is of the form $\{(s, s^{-1})\}\subseteq S\times S$, where $S$ is (any choice of) maximal torus of $\widehat{H}$. Note that no root of $\widehat{H}\times \widehat{H}$ is fixed by $\theta$, which implies $\widehat{G}^+$ is trivial and hence the restriction of $\widehat{\varphi}_X:\widehat{G}_X\times\SL_2(\C)\to \widehat{G}$ to $\SL_2(\C)$ is trivial. One can then see that $\widehat{\varphi}_X$ is given by
\[
\widehat{H}\longrightarrow \widehat{H}\times \widehat{H},\quad h\mapsto (h, c_{\widehat{H}}(h)),
\]
where $c_{\widehat{H}}$ is an involution on $\widehat{H}$ which acts on the torus $S\subseteq \widehat{H}$ as $c_{\widehat{H}}(s)=w_{\widehat{H}}(s^{-1})$, where $w_{\widehat{H}}$ is the longest element of the Weyl group of $\widehat{H}$. This involution is known as the Chevalley involution and it is conjectured by Prasad in \cite[Conjecture 2]{Prasad-MVW} that the Chevalley involution of a local Langlands parameter corresponds to the contragredient.

Hence our Conjecture (I) is equivalent to this conjecture of Prasad. Also in the group case the relative matrix coefficients are the usual matrix coefficients, and hence $\tau\otimes\check{\tau}$ is relatively cuspidal (resp.\ relatively square integrable, resp.\ relatively tempered) if and only if $\tau$ is cuspidal (resp.\ square integrable, resp.\ tempered). Thus the three statements in Conjecture (II) are well-known (conjectural) properties of the local Langlands correspondence.

\subsection{Linear period of $\GL_{2n}$.}
Consider the case $X=(\GL_n\times\GL_n)\backslash \GL_{2n}$, so we consider a $\GL_n\times\GL_n$-period of $\GL_{2n}$, which is often known as a linear period. Then $X$ is a symmetric variety with $\theta=\Int(\begin{pmatrix}&J_n\\ J_n&\end{pmatrix})$, where $J_n$ is the $n\times n$ anti-diagonal matrix. One can readily check that the dual of $\theta$ acts on $\GL_{2n}^\vee=\GL_{2n}(\C)$ in the same way. One can then see that the $\theta$-split torus $\widehat{T}^-$ of $\GL_{2n}(\C)$ is
\[
\widehat{T}^-=\{\diag(t_1,\dots,t_n,t_n^{-1},\dots,t_1^{-1})\st t_i\in\C^\times\},
\]
and there is no $\theta$-invariant root. Then the usual choice $\Delta=\{\alpha_1,\dots,\alpha_n,\alpha_{n+1},\dots\alpha_{2n-1}\}$ gives a $\theta$-order on the root datum of $\GL_{2n}(\C)$, where $\alpha_i=e_i-e_{i+1}$. One can then see that
\[
\oalpha_i=\oalpha_{2n-i}=\overline{e_i-e_{i+1}}\quad(1\leq i\leq n-1),\qand \oalpha_n=2\overline{e_n}.
\]
Hence the $\theta$-split root datum is that of $\Sp_{2n}$, so $\widehat{G}_X=\Sp_{2n}(\C)$. Since there is no $\theta$-invariant root, the restriction of $\widehat{\varphi}_X:\widehat{G}_X\times\SL_2(\C)\to \widehat{G}$ to $\SL_2(\C)$ is trivial, and hence we just write $\widehat{\varphi}_X:\widehat{G}_X\to \widehat{G}$, which is nothing but the inclusion $\Sp_{2n}(\C)\subseteq\GL_{2n}(\C)$; namely we have
\[
\widehat{\varphi}_X:\Sp_{2n}(\C)\xhookrightarrow{\quad}\GL_{2n}(\C).
\]

Now, Conjecture (I) implies that if $\pi\in\Irr(\GL_{2n})$ is $\GL_n\times\GL_n$-distinguished then its $L$-parameter $\varphi_{\pi}$ factors through the inclusion $\Sp_{2n}(\C)\subseteq\GL_{2n}(\C)$, or equivalently $\varphi_\pi$ is of symplectic type.

If $n=1$ then $X=T\backslash \GL_2$, where $T$ is a split-torus, and $\widehat{G}_X=\SL_2(\C)$. This is the well-known case studied by Waldspurger \cite{Waldspurger_Toric_period}, according to which $\pi$ has a $T$-period if and only if $\pi$ has a trivial central character. Hence Conjecture (I) along with its converse holds. But recently in his master's thesis \cite{Matsumoto}, Y.\ Matsumoto has shown that if $\pi$ is a non-twisted Steinberg representation then $\pi$ is relatively cuspidal, so that this example shows that the converse of (II-a) does not hold. 

The case for general $n$ has been worked out by many people, especially by relating the linear period with the Shalika period. (See \cite{Jacquet-Rallis_linear_period, Jiang_Nien_Qin, Matringe_linear_period, Matringe_linear-Shalika, Smith_linear, Smith_linera_IJNT, SV}.) In particular, it is known that a square integrable $\pi\in\Irr(\GL_{2n})$ is $\GL_n\times\GL_n$-distinguished if and only if $\pi$ is of symplectic type. (See \cite[Proposition 6.1]{Matringe_linear-Shalika}.) This proves Conjecture (I) and its converse for square integrable $\pi$. Also when $\pi$ is the generalized Steinberg representation $St_k(\rho)$ in the sense of \cite{Matringe_linear-Shalika}, then Theorem 6.1 of \cite{Matringe_linear-Shalika} says that $\pi$ is $\GL_n\times\GL_n$-distinguished if and only if $\rho$ is of symplectic type for $k$ odd and of orthogonal type for $k$ even. But the $L$-parameter of $St_k(\rho)$ is of the form $\varphi_\rho\otimes S_k$, where $\varphi_\rho$ is the $L$-parameter of $\rho$ and $S_k$ is the $k$-dimensional representation of $\SL_2(\C)$. Since $S_k$ is of orthogonal type for $k$ odd and of symplectic type for $k$ even, we know that $St_k(\rho)$ is always of symplectic type. Thus Conjecture (I) holds for $St_k(\rho)$.

Assume $F$ has odd residual characteristic. In \cite[Theorem 6.3]{Smith_linear}, Smith constructed a large family of relatively discrete series $\GL_n\times\GL_n$-distinguished representations which are not themselves discrete series. Later in \cite[Theorem 3.3]{Smith_linera_IJNT} he showed that they are of symplectic type and their $L$-parameters are not in a proper Levi of $\Sp_{2n}(\C)$, showing one direction of Conjecture (II-b) for this family of representations.

Also when $n=2$ and $F$ has odd residual characteristic, some relatively cuspidal $\GL_2\times\GL_2$-distinguished representations are studied in \cite{KT-Tokyo}. The result there is also consistent with Conjectures (I) and (II-a).

\subsection{$\GL_{n-1}\times\GL_1$-period of $\GL_{n}$}
Consider the case $X=(\GL_{n-1}\times\GL_1)\backslash\GL_{n}$. This is a symmetric variety with $\theta=\Int(\begin{pmatrix}&&1\\&I_{n-2}&\\1&&\end{pmatrix})$, where $I_{n-2}$ is the $(n-2)\times(n-2)$ identity matrix. The dual of $\theta$ acts in the same way on the dual $\GL_n^\vee=\GL_n(\C)$, and the $\theta$-invariant torus $\widehat{T}^+$ and the $\theta$-split torus $\widehat{T}^-$ are respectively
\begin{align*}
\widehat{T}^+&=\{\diag(1,t_2,\dots,t_{n-1},1)\st t_i\in\C^\times\}Z_{\GL_n}(\C);\\
\widehat{T}^-&=\{\diag(t,1,\dots,1,t^{-1})\st t\in\C^\times\},
\end{align*}
where $Z_{\GL_n}$ is the center of $\GL_n$. The usual choice of the simple roots $\Delta=\{\alpha_1,\dots,\alpha_{n-1}\}$ gives a $\theta$-order. Then
\[
\Delta_0=\{\alpha_2,\dots,\alpha_{n-2}\}\qand \Sigma_\theta=\{\alpha_1-\theta\alpha_1\}=\{e_1-e_n\};
\]
namely the diagram of $\theta$ is precisely that of type $A_{n-1}$ as in Example \ref{Ex:A_n}.

The $\theta$-split root datum is that of $\SL_2(\C)$. Furthermore, $\alpha_1-\theta\alpha_1=e_1-e_n$ is strongly orthogonal to all the roots in $\Delta_0$. This implies that the two groups $\widehat{G}^+$ and $\widehat{G}^-$ commute with each other. Indeed, one can see that $\widehat{G}^-\times \widehat{G}^+=\SL_2(\C)\times Z_{\GL_n}(\C)\GL_{n-2}(\C)$ and this embeds into $\GL_{n}(\C)$ as
\[
\widehat{\varphi}_X:\SL_2(\C)\times Z_{\GL_n}(\C)\GL_{n-2}(\C)\longrightarrow\GL_n(\C),\quad (\begin{pmatrix}a&b\\c&d\end{pmatrix}, zg)\mapsto z\begin{pmatrix}a&&b\\&g&\\c&&d\end{pmatrix}.
\]
Accordingly, the map $\widehat{\varphi}_X:\SL_2(\C)\times\SL_2(\C)\to\GL_n(\C)$ is given by pre-composing the above embedding with the principal $\SL_2(\C)\to\GL_{n-2}(\C)$ for the second $\SL_2(\C)$.

It is good to observe that the sum of positive coroots of $\GL_n(\C)$ in fact factors through $\widehat{\varphi}_X$, so that the $L$-parameter of the trivial representation factors through it. To see it, recall that the sum of positive coroots of $\widehat{G}^+=Z_{\GL_n}(\C)\GL_{n-2}(\C)$ factors through the principal $\SL_2(\C)\to\GL_{n-2}(\C)$. The remaining positive coroots of $\GL_n(\C)$ (which we identify with roots of $\GL_n$) are
\[
e_1-e_2,\dots,e_1-e_n, e_2-e_n, e_3-e_n,\dots, e_{n-1}-e_n.
\]
The sum of them is $(n-1)(e_1-e_n)$, so its image (when it is viewed as a cocharacter of $\widehat{\GL_n}=\GL_n(\C)$) is of the form
\[
\begin{pmatrix}t^{n-1}&&\\&I_{n-2}&\\ &&t^{-(n-1)}\end{pmatrix}\quad (t\in\C^\times),
\]
which is indeed in the first $\SL_2(\C)$.

In the literature, instead of $\GL_{n-1}\times\GL_1$-distinguished representations, $\GL_{n-1}$-distinguished representations have been more often studied. But if $\pi\in\Irr(\GL_n)$ has a trivial central character, then $\pi$ is $\GL_{n-1}$-distinguished if and only if it is $\GL_{n-1}\times\GL_1$-distinguished because $Z_{\GL_n}\GL_{n-1}=\GL_{n-1}\times\GL_1$. Hence, the $\GL_{n-1}\times\GL_1$-distinguished representations are subsumed under the $\GL_{n-1}$-distinguished representations.

Now, the $\GL_n$-distinguished representations were first studied by Prasad \cite{Prasad-GL_3} for $n=3$, in which he gave a complete list of distinguished representations and conjectures for larger $n$. Later, Venketasubramanian in \cite[Theorem 1.1]{Venketasubramanian} proved the conjecture (with minor modifications), according to which $\pi\in\Irr(\GL_n)$ is $\GL_{n-1}\times\GL_1$-distinguished if and only if either $\pi=\one$ or $\pi=\Ind_{P_{2,n-2}}^{\GL_n}\rho\otimes \one_{\GL_{n-2}}$ for an infinite dimensional representation of $\GL_2$ with trivial central character, where $P_{2, n-2}$ is the $(2, n-2)$-parabolic. In other words, by using the notation of \cite{Venketasubramanian}, $\pi$ is $\GL_{n-1}\times\GL_1$-distinguished if and only if the $L$-parameter $\Lcal(\pi)$ of $\pi$ contains the $L$-parameter $\Lcal(\one_{n-2})$ of the trivial representation $\one_{n-2}$ of $\GL_{n-2}$ and the quotient $\Lcal(\pi)\slash\Lcal(\one_{n-2})$ is an $L$-parameter of an infinite dimensional representation of $\GL_2$. (Since we are assuming the central character of $\pi$ is trivial, the case where the quotient $\Lcal(\pi)\slash\Lcal(\one_{n-2})$ is the character $\nu^{\pm\frac{n-2}{2}}$ of $\GL_2$ in the notation of \cite{Venketasubramanian} never happens.) Such parameter certainly factors through $\widehat{\varphi}_X$. Hence Conjecture (I) is satisfied. The converse of Conjecture (I) fails when the quotient $\Lcal(\pi)\slash\Lcal(\one_{n-2})$ corresponds to the trivial representation of $\GL_2$. But this is the only case where the $L$-parameter factors through $\widehat{\varphi}_X$ but the corresponding representation is not $\GL_{n-1}\times\GL_1$-distinguished.

As for Conjecture (II), Kato and Takano \cite[Proposition 8.2.3]{KT-Cuspidal} showed that if $\rho$ is a supercuspidal representation of $\GL_2$ with trivial central character then the induced representation $\Ind_{P_{2,n-2}}^{\GL_n}\rho\otimes \one$ is relatively cuspidal. Also if $\rho$ is square integrable then $\Ind_{P_{2,1}}^{\GL_3}\rho\otimes \one$ is relatively square integrable (\cite[5.1]{KT-Square}). One can also prove that if $\rho$ is tempered then $\Ind_{P_{2,1}}^{\GL_3}\rho\otimes \one$ is relatively tempered by using the same computation as \cite[5.1]{KT-Square}. These are consistent with Conjecture (II).

\subsection{Symplectic period of $\GL_{2n}$}\label{SS:symplectic_period}
Let us consider $X=\Sp_{2n}\backslash\GL_{2n}$. Let
\[
J_n=\begin{pmatrix}\begin{smallmatrix}0&1\\-1&0\end{smallmatrix}&&\\&\ddots&\\ &&\begin{smallmatrix}0&1\\-1&0\end{smallmatrix}\end{pmatrix},
\]
and define the involution $\theta$ by $\theta(g)=J_n{^tg^{-1}}J_n^{-1}$. Then the set of $\theta$-fixed points is $\Sp_{2n}$. The dual of $\theta$ acts on the dual group $\GL_{2n}(\C)$ in the same way, and
\begin{align*}
\widehat{T}^+&=\{\diag(t_1,t_1^{-1},t_2,t_2^{-1},\dots,t_n,t_n^{-1})\st t_i\in\C^\times\};\\
\widehat{T}^-&=\{\diag(t_1,t_1,t_2,t_2,\dots,t_n,t_n)\st t_i\in\C^\times\}.
\end{align*}
One can readily see that
\[
\theta(e_i-e_{i+1})=\begin{cases}e_i-e_{i+1}&\text{if $i$ is odd};\\-(e_{i-1}-e_{i+2})&\text{if $i$ is even},\end{cases}
\]
and hence the usual choice of simple roots $\Delta=\{e_1-e_2,\dots,e_{2n-1}-e_{2n}\}$ gives a $\theta$-order. Note that
\[
\Delta_0=\{e_i-e_{i+1}\st \text{$i$ is odd}\}\qand \Delta\smallsetminus\Delta_0=\{e_i-e_{i+1}\st \text{$i$ is even}\}.
\]
(We have chosen the above $J_n$ to define $\Sp_{2n}$ precisely because this choice makes the usual choice of the simple roots give a $\theta$-order.)

Since all the roots in $\Delta_0$ are strongly orthogonal, one can see that
\[
\widehat{G}^+=\SL_2(\C)\times\SL_2(\C)\times\cdots\times\SL_2(\C),
\]
which embeds in $\GL_{2n}(\C)$ diagonally. On the other hand, the $\theta$-split root datum is that of $\GL_n(\C)$, so that $\widehat{G}_X=\widehat{G}^-=\GL_{n}(\C)$, and this $\GL_n(\C)$ embeds into $\GL_{2n}(\C)$ as
\[
\begin{pmatrix}a_{11}&\cdots&a_{1n}\\\vdots&\ddots&\vdots\\a_{n1}&\cdots&a_{nn}\end{pmatrix}\mapsto \begin{pmatrix}a_{11}I_2&\cdots&a_{1n}I_2\\\vdots&\ddots&\vdots\\a_{n1}I_2&\cdots&a_{nn}I_2\end{pmatrix},
\]
where $I_2$ is the $2\times 2$ identity matrix. If $V_n$ is an $n$ dimensional complex vector space and $V_2$ a 2 dimensional complex vector space, then the map $\widehat{\varphi}_X:\GL_n(\C)\times\SL_2(\C)\to\GL_{2n}(\C)=\GL(V_n\otimes V_2)$ is realized as the representation $std_n\otimes S_2$, where $std_n$ is the standard representation of $\GL_n(\C)$ on $V_n$ and $S_2$ is the standard representation of $\SL_2(\C)$ on $V_2$; namely we have
\[
\widehat{\varphi}_X:\GL_{n}(\C)\times\SL_2(\C)\longrightarrow\GL(V_n\otimes V_2),\quad \widehat{\varphi}_X=std_n\otimes S_2.
\]

One can then see that the image of each $L$-parameter $\varphi:WD_F\to \widehat{G}_X\times\SL_2(\C)\to\GL_{2n}(\C)$ is of the form
\[
\begin{pmatrix}\varphi_\rho(w)|w|^\frac{1}{2}&\\&\varphi_\rho(w)|w|^{-\frac{1}{2}}\end{pmatrix},
\]
where $\varphi_\rho:WD_F\to \widehat{G}_X=\GL_n(\C)$ is the first component of $\varphi$.

Hence, if $\pi\in\Irr(\GL_{2n}(\C))$ is $\Sp_{2n}$-distinguished then Conjecture (I) implies that $\pi$ has to be a constituent of
\[
I_\rho:=\Ind_{P_{n,n}}^{\GL_{2n}}\left(\rho|\det|^{\frac{1}{2}}\otimes \rho|\det|^{-\frac{1}{2}}\right)
\]
for some $\rho\in\Irr(\GL_n)$, where $P_{n,n}$ is the $(n,n)$-parabolic. Further, Conjecture (II) implies that (a) if $\rho$ is supercuspidal then $\pi$ is relatively cuspidal, (b) $\rho$ is square integrable if and only if $\pi$ is relatively square integrable, and (c) $\rho$ is tempered if and only if $\pi$ is relatively tempered.

Now, the $\Sp_{2n}$-distinguished representations were first studied by Leumos and Rallis \cite{Heumos-Rallis}, in which they showed that for square integrable $\rho$ the Langlands quotient $\pi_\rho$ of $I_\rho$ is distinguished. Then, assuming the residue characteristic of $F$ is odd, Kato and Takano proved that if $\rho$ is supercuspidal then $\pi_\rho$ is relatively cuspidal (\cite[Proposition 8.3.4]{KT-Cuspidal}), and Smith (\cite[Theorem 6.1]{Smith_symplectic}) proved that if $\rho$ is square integrable then $\pi_\rho$ is relatively square integrable. All these results are consistent with our Conjecture.

\subsection{Orthogonal period of $\GL_n$}\label{SS:orthogonal_period}
Consider the case $X=\OO_n\backslash\GL_n$, where $\OO_n$ is the split orthogonal group. This is a symmetric variety with the involution $\theta$ given by $\theta(g)={^tg^{-1}}$. The dual of $\theta$ acts on the dual group $\GL_n(\C)$ in the same way, and the maximal torus of $\GL_n(\C)$ is $\theta$-split. Further, for each root $\alpha$, we have $\theta\alpha=-\alpha$. From these, one can readily see that $\widehat{G}^+=1$ and $\widehat{G}_X=\widehat{G}^-=\GL_n(\C)$. Hence $\widehat{\varphi}_X:\widehat{G}_X\times\SL_2(\C)\to\GL_n(\C)$ is the same as the identity map $\GL_n(\C)\to\GL_n(\C)$, so Conjecture (I) trivially holds.

An interesting question, however, is the converse of Conjecture (I). The converse certainly fails for the simple reason that if the central character $\omega_{\pi}$ of $\pi\in\Irr(\GL_n)$ is not trivial upon restriction to $\OO_{n}$ (namely if $\omega_{\pi}(-1)=-1$), then $\pi$ cannot be $\OO_n$-distinguished. Yet, very recently, Zou \cite{Zou} has shown that, assuming the residue characteristic of $F$ is odd, a supercuspidal $\pi$ is $\OO_n$-distinguished if and only if $\omega_{\pi}(-1)=1$. Also the author has been communicated by Zou that it has been conjectured that $\pi\in\Irr(\GL_n)$ is $\OO_n$-distinguished if and only if $\pi$ is lifted from the Kazhdan-Patterson cover (\cite{KP}) of $\GL_n$. Presumably, one way to modify our Conjectures is to incorporate the theory of $L$-groups for covering groups.

%%%%%%%%%%%%%%%%%%%%%%%%%%%%%%%%%%%%%%%%%%%%%%%%%%%%%%%%%%%%%%%%%%%%%%%%%%%%%%%%%%%%%%%%%

\appendix
\section{Rank one involutions}\label{A:table}

%%%%%%%%%%%%%%%%%%%%%%%%%%%%%%%%%%%%%%%%%%%%%%%%%%%%%%%%%%%%%%%%%%%%%%%%%%%%%%%%%%%%%%%%%

In this appendix, we summarize the results on rank one involutions, especially by computing $\alpha-\theta\alpha$ for a $\theta$-non-invariant simple root $\alpha$. This proves Proposition \ref{P:types_of_simple_roots}. All the results are summarized in the table below, which also incorporates the information on admissibility. (Also see \cite[\textsc{Table} I, p.39]{Helminck_commuting_pair}.)

{

\pgfkeys{/Dynkin diagram, edge-length=.7cm, root-radius=.07cm}

\begin{table}[h]
\caption{Rank one involutions}\label{T:table}
\centering
\renewcommand{\arraystretch}{1.5}
\begin{tabular}{cclcc}
\hline
No. & $\Phi$ & Diagram & Type & Admissible \\\hline\hline
1 &$A_1\times A_1$  & \dynkin[labels={,}, involutions={1<above>[\theta^*]2}, edge/.style={white}]A{oo} & 3  & $+$ \\
2 & $A_1$ & \dynkin A{o} & 2 & $+$ \\
3 & $A_2$ & \dynkin A{o*} & 3 & $-$\\
4 & $A_3$ & \dynkin A{*o*} & 3 & $+$\\
5 & $A_n$ & \dynkin[involutions={1<above>[\theta^*]4}]A{o*.*o} & 1 & $+$\\\hline
6 & \multirow{2}{*}{$B_n$} & \dynkin B{o**.**} & 2 & $+$ \\
7 &       & \dynkin B{*o*.**} & 1 & $-$ \\\hline
8 & \multirow{2}{*}{$C_n$} & \dynkin C{o**.**} & 1 & $-$ \\
9 &       & \dynkin C{*o*.**} & 1 & $+$ \\\hline
10 & \multirow{2}{*}{$D_n$} & \dynkin D{o**.***} & 3 & $+$ \\
11 &       & \dynkin D{*o*.***} & 1 & $-$ \\\hline
12 & $E_6$ & \dynkin E{*o****} & 1 & $-$ \\\hline
13 & $E_7$ & \dynkin E{o******} & 1 & $-$ \\\hline
14 & $E_8$ & \dynkin E{*******o} & 1 & $-$ \\\hline
15 & \multirow{2}{*}{$F_4$} & \dynkin F{o***} & 1 & $-$ \\
16 & & \dynkin F{***o} & 1 & $+$ \\\hline
17 & \multirow{2}{*}{$G_2$} & \dynkin G{*o} & 1 & $-$ \\
18 & & \dynkin G{o*} & 1 & $-$ \\\hline
\end{tabular}
\end{table}

}

\subsection{Type $A_n$ (No.\ 1 -5)}
There are five cases (No.\ 1 - 5 in the table) and the computation of $\alpha-\theta\alpha$ is already done in Section \ref{SS:examples_Dynkin_diagrams}. See Examples \ref{Ex:A_1}, \ref{Ex:A_2}, \ref{Ex:A_3} and \ref{Ex:A_n}.

\subsection{Type $B_n$ (No.\ 6 -7)}
There are two cases (No.\ 6 - 7 in the table). The roots are $\{\pm e_i\pm e_j\}\cup\{\pm e_i\}$ for $1\leq i, j\leq n$. \\

\noindent {No.6.}
\[
\dynkin B{o**.**}
\]
and
\[
\alpha=e_1-e_2,
\]
so the Weyl group element $w_0$ acts as
\[
w_0 e_1=e_1,\qquad w_0e_i=-e_i\quad (2\leq i\leq n),
\]
and $-\theta=w_0$. Hence
\[
\alpha-\theta\alpha=e_1-e_2+w_0(e_1-e_2)=2e_1,
\]
so
\[
\frac{1}{2}(\alpha-\theta\alpha)\in\Phi.
\]

\noindent {No.7.}
\[
\dynkin B{*o*.**}
\]
and
\[
\alpha=e_2-e_3,
\]
so the Weyl group element $w_0$ acts as
\[
w_0 e_1=e_2,\qquad w_0e_i=-e_i\quad (3\leq i\leq n),
\]
and $-\theta=w_0$. Hence
\[
\alpha-\theta\alpha=e_2-e_3+w_0(e_2-e_3)=e_1+e_2,
\]
so
\[
\alpha-\theta\alpha\in\Phi.
\]

\subsection{Type $C_n$ (No.\ 8 -9)}
There are two cases (No.\ 8 - 9 in the table). The roots are $\{\pm e_i\pm e_j\}\cup\{\pm 2e_i\}$ for $1\leq i, j\leq n$.\\

\noindent {No.8.}
\[
\dynkin C{o**.**}
\]
and
\[
\alpha=e_1-e_2,
\]
so the Weyl group element $w_0$ acts as
\[
w_0 e_1=e_1,\qquad w_0e_i=-e_i\quad (2\leq i\leq n),
\]
and $-\theta=w_0$. Hence
\[
\alpha-\theta\alpha=e_1-e_2+w_0(e_1-e_2)=2e_1,
\]
so
\[
\alpha-\theta\alpha\in\Phi.
\]

\noindent {No.9.}
\[
\dynkin C{*o*.**}
\]
and
\[
\alpha=e_2-e_3,
\]
so the Weyl group element $w_0$ acts as
\[
w_0 e_1=e_2,\qquad w_0e_i=-e_i\quad (3\leq i\leq n),
\]
and $-\theta=w_0$. Hence
\[
\alpha-\theta\alpha=e_2-e_3+w_0(e_2-e_3)=e_1+e_2,
\]
so
\[
\alpha-\theta\alpha\in\Phi.
\]

\subsection{Type $D_n$ (No.\ 10 - 11)}
There are two cases (No.\ 10 - 11 in the table). The roots are $\{\pm e_i\pm e_j\}$ for $1\leq i, j\leq n$.\\

\noindent {No.10.}
\[
\dynkin D{o**.***}
\]
and
\[
\alpha=e_1-e_2,
\]
so the Weyl group element $w_0$ acts as
\[
\begin{cases}
w_0 e_1=e_1,\quad w_0e_i=-e_i\quad (2\leq i\leq n), &\text{if $n$ is odd};\\
w_0 e_1=e_1,\quad w_0e_i=-e_i\quad (2\leq i\leq n-1),\quad w_0e_n=e_n, &\text{if $n$ is even},
\end{cases}
\]
and
\[
-\theta=\begin{cases}w_0 &\text{if $n$ is odd};\\ \theta^*w_0 &\text{if $n$ is even},\end{cases}
\]
where $\theta^*$ is the nontrivial diagram automorphism. In either case,
\[
\alpha-\theta\alpha=e_1-e_2-\theta(e_1-e_2)=2e_1=(e_1-e_n)+(e_1+e_n),
\]
so $\alpha$ is of type 3.\\

\noindent {No.11.}
\[
\dynkin D{*o*.***}
\]
and
\[
\alpha=e_2-e_3,
\]
so the Weyl group element $w_0$ acts as
\[
\begin{cases}
w_0 e_1=e_2,\quad w_0e_i=-e_i\quad (3\leq i\leq n), &\text{if $n$ is even};\\
w_0 e_1=e_2,\quad w_0e_i=-e_i\quad (3\leq i\leq n-1),\quad w_0e_n=e_n, &\text{if $n$ is odd},
\end{cases}
\]
and
\[
-\theta=\begin{cases}w_0 &\text{if $n$ is even};\\ \theta^*w_0 &\text{if $n$ is odd},\end{cases}
\]
where $\theta^*$ is the nontrivial diagram automorphism. In either case,
\[
\alpha-\theta\alpha=e_2-e_3-\theta(e_2-e_3)=(e_2-e_3)+(e_1+e_3)=e_1+e_2,
\]
so
\[
\alpha-\theta\alpha\in\Phi.
\]

\subsection{Type $E_6$ (No.\ 12)}
There is one case (No.\ 12 in the table). The roots are
\[
\{\pm(e_i-e_j)\st 1\leq i\neq j\leq 6\}\cup \{\pm (e_7-e_8)\}\cup\{\frac{1}{2}(\pm e_1\pm\cdots\pm e_8)\},
\]
where in the third set there must be exactly four minus signs and $e_7$ and $e_8$ must have opposite signs.

Let us first note that this choice of roots is different from the convection adapted in the book by Knapp \cite{Knapp}, and it is an elementary exercise that the above choice also gives a root system of $E_6$. This convention allows us to label the simple roots as
\[
\dynkin[labels={\alpha_1,\alpha,\alpha_2, \alpha_3, \alpha_4, \alpha_5}]E{*o****}
\]
where
\[
\alpha=\frac{1}{2}(e_1+e_2+e_3-e_4-e_5-e_6-e_7+e_8)
\]
and
\[
\alpha_1=e_1-e_2,\quad\alpha_2=e_2-e_3,\quad\cdots,\quad \alpha_5=e_5-e_6.
\]
The Weyl group element $w_0$ acts as
\[
w_0e_1=e_6,\quad w_0e_2=e_5,\quad w_0e_3=e_4,\quad w_0e_7=e_7,\quad w_0e_8=e_8,
\]
and $-\theta=w_0$. Hence
\[
\alpha-\theta\alpha=-e_7+e_8,
\]
so
\[
\alpha-\theta\alpha\in\Phi.
\]

\subsection{Type $E_7$ (No.\ 13)}
There is one case (No.\ 13 in the table). The roots are
\[
\{\pm e_i\pm e_j \st 1\leq i\neq j\leq 6\}\cup\{\pm(e_7-e_8)\}\cup\{\frac{1}{2}(\pm e_1\pm\cdots\pm e_8)\},
\]
where in the third set there must be an even number of minus signs and $e_7$ and $e_8$ must have the opposite signs. The simple roots are
\[
\alpha_1=\frac{1}{2}(e_8-e_7-e_6-e_5-e_4-e_3-e_2+e_1)
\]
and
\[
\alpha_2=e_2+e_1,\quad \alpha_3=e_2-e_1,\quad \alpha_4=e_3-e_1,\quad\cdots,\quad\alpha_7=e_6-e_5,
\]
so that
\[
\dynkin[labels={\alpha,\alpha_2,\alpha_3,\alpha_4,\alpha_5,\alpha_6,\alpha_7}]E{o******}
\]
where
\[
\alpha=\alpha_1=\frac{1}{2}(e_8-e_7-e_6-e_5-e_4-e_3-e_2+e_1),
\]
and the Weyl group element $w_0$ is the one for the $D_6$-system, and hence acts as
\[
w_0e_i=-e_i\quad (1\leq i\leq 6),\quad w_0e_7=e_7,\quad w_0e_8=e_8.
\]
Since $-\theta=w_0$,
\[
\alpha-\theta\alpha=\alpha+w_0\alpha=e_8-e_7,
\]
so
\[
\alpha-\theta\alpha\in\Phi.
\]

\subsection{Type $E_8$ (No.\ 14)}
There is one case (No.\ 14 in the table). The roots are
\[
\{\pm e_i\pm e_j\st 1\leq i\neq j\leq 8\}\cup\{\frac{1}{2}(\pm e_1\pm\cdots\pm e_8)\},
\]
where in the second set there must be an even number of minus signs. The simple roots are
\[
\alpha_1=\frac{1}{2}(e_8-e_7-e_6-e_5-e_4-e_3-e_2+e_1)
\]
and
\[
\alpha_2=e_2+e_1,\quad \alpha_3=e_2-e_1,\quad \alpha_4=e_3-e_1,\quad\cdots,\quad\alpha_8=e_7-e_6,
\]
so that
\[
\dynkin[labels={\alpha_1,\alpha_2,\alpha_3,\alpha_4,\alpha_5,\alpha_6,\alpha_7,\alpha}]E{*******o}
\]
where
\[
\alpha=\alpha_8=e_7-e_6.
\]
The Weyl group element $w_0$ acts as
\[
w_0e_i=-e_i\quad (1\leq i\leq 6),\quad w_0e_7=e_8,
\]
and $-\theta=w_0$. Hence
\[
\alpha-\theta\alpha=e_7-e_6+w_0(e_7-e_6)=e_7+e_8
\]
so
\[
\alpha-\theta\alpha\in\Phi.
\]

\subsection{Type $F_4$ (No.\ 15 - 16)}
There are two cases (No.\ 15 - 16 in the table). The roots are
\[
\{\pm e_i\}\cup\{\pm e_i\pm e_j\}\cup\{\frac{1}{2}(\pm e_1\pm e_2\pm e_3\pm e_4)\}
\]
where $1\leq i\neq j\leq 4$, and the simple roots are
\[
\alpha_1=\frac{1}{2}(e_1-e_2-e_3-e_4),\quad \alpha_2=e_4,\quad \alpha_3=e_3-e_4,\quad \alpha_4=e_2-e_3,
\]
so that $\alpha_1$ and $\alpha_2$ are the short roots.\\

\noindent {No.15.}
\[
\dynkin[labels={\alpha_4,\alpha_3,\alpha_2,\alpha_1}]F{o***}
\]
so that
\[
\alpha=\alpha_4=e_2-e_3.
\]
The Weyl group element $w_0$ is the longest element in the $C_3$ system, so it acts as
\[
w_0e_1=e_2,\quad w_0e_3=-e_3,\quad w_0e_4=-e_4,
\]
and $-\theta=w_0$. Hence
\[
\alpha-\theta\alpha=(e_2-e_3)+w_0(e_2-e_3)=e_1+e_2,
\]
so
\[
\alpha-\theta\alpha\in\Phi.
\]

\noindent {No.16.}
\[
\dynkin[labels={\alpha_4,\alpha_3,\alpha_2,\alpha_1}]F{***o}
\]
so that
\[
\alpha=\alpha_1=\frac{1}{2}(e_1-e_2-e_3-e_4).
\]
The Weyl group element $w_0$ is the longest element in the $B_3$ system, so it acts as
\[
w_0e_1=e_1,\quad w_0e_i=-e_i\quad (2\leq i\leq 4),
\]
and $-\theta=w_0$. Hence
\[
\alpha-\theta\alpha=\alpha+w_0\alpha=\frac{1}{2}(e_1-e_2-e_3-e_4)+\frac{1}{2}(e_1+e_2+e_3+e_4)=e_1
\]
so
\[
\alpha-\theta\alpha\in\Phi.
\]

\subsection{Type $G_2$ (No.\ 17 - 18)}

There are two cases (No.\ 17 - 18 in the table).\\

\noindent{No.\ 17.}
\[
\dynkin[labels={\gamma, \alpha}]G{*o}
\]
where $\gamma$ is a long root and $\alpha$ is a short root, so that the Weyl group element $w_0$ is the simple reflection associated with the long root $\gamma$, and $-\theta=w_0$. Hence
\[
\alpha-\theta\alpha=\alpha+w_0\alpha=2\alpha+\gamma=\text{another short root},
\]
so
\[
\alpha-\theta\alpha\in\Phi.
\]

\noindent{No.\ 18.}
\[
\dynkin[labels={\alpha, \gamma}]G{o*}
\]
where $\gamma$ is a short root and $\alpha$ is a long root, so that the Weyl group element $w_0$ is the simple reflection associated with the short root $\gamma$, and $-\theta=w_0$. Hence
\[
\alpha-\theta\alpha=\alpha+w_0\alpha=2\alpha+3\gamma=\text{another long root},
\]
so
\[
\alpha-\theta\alpha\in\Phi.
\]

\quad\\

\bibliographystyle{amsalpha}
\bibliography{Dual_Group_Symmetric}

\end{document}